\documentclass[aihp]{imsart}

\RequirePackage{amsthm,amsmath,amsfonts,amssymb}
\RequirePackage[numbers,sort&compress]{natbib}
\RequirePackage[colorlinks,citecolor=blue,urlcolor=blue]{hyperref}
\RequirePackage{graphicx}
\usepackage{indentfirst}

\startlocaldefs
\theoremstyle{plain}
\newtheorem{axiom}{Axiom}
\newtheorem{claim}[axiom]{Claim}
\newtheorem{theorem}{Theorem}[section]
\newtheorem{lemma}[theorem]{Lemma}
\newtheorem{corollary}{Corollary}

\newtheorem{definition}[theorem]{Definition}
\newtheorem{example}{Example}
\theoremstyle{remark}

\endlocaldefs
		
\usepackage{caption}
\captionsetup[figure]{labelformat=simple, labelsep=period, name=Fig.}

\begin{document}
	\begin{frontmatter}
		\title{Poisson stability of solutions for stochastic evolution equations driven by fractional Brownian motion}
		\runtitle{Poisson stability of solutions for SDE driven by fBm}

		\begin{aug}
			\author[A]{\inits{F.}\fnms{Xinze}~\snm{Zhang}\ead[label=e1]{xzzhang21@mails.jlu.edu.cn}},
			\author[A,B]{\inits{S.}\fnms{Yong}~\snm{Li}\ead[label=e2]{liyong@jlu.edu.cn}},
			\author[A]{\inits{S.}\fnms{Xue}~\snm{Yang}\ead[label=e3]{xueyang@jlu.edu.cn}} \thanksref{t1}.
			
			\address[A]{School of Mathematics, Jilin University, ChangChun, People's Republic of China}
			
			\address[B]{Center for Mathematics and Interdisciplinary Sciences,\\    ~~~ Northeast Normal University, ChangChun, People's Republic of China\\
			\printead{e1,e2,e3}}

			\thankstext{t1}{Corresponding author.}
		\end{aug}
		
		\begin{abstract}
			In this paper, we study the problem of Poisson stability of solutions for stochastic semi-linear evolution equation driven by fractional Brownian motion
			\begin{displaymath}
				\mathrm{d} X(t)= \left( AX(t) + f(t,X(t)) \right) \mathrm{d}t + g\left( t \right)\mathrm{d}B^H_{Q}(t), 
			\end{displaymath}
			where $A$ is an exponentially stable linear operator acting on a separable Hilbert space ${\mathbb{H}}$, coefficients $f$ and $g$ are Poisson stable in time, and $B^H_Q (t)$ is a $Q$-cylindrical fBm with Hurst index $H > 1/2$. First, we establish the existence and uniqueness of the solution for this equation. Then, we prove that under the condition where the functions $f$ and $g$ are sufficiently "small", the equation admits a solution that exhibits the same character of recurrence as $f$ and $g$. The discussion is further extended to the asymptotic stability of these Poisson stable solutions. Finally, we conducted a numerical simulation using an example to validate our results.
		\end{abstract}

		\begin{keyword}[class=MSC]
			\kwd[Primary ]{60G22}
			\kwd{34C25}
			\kwd{34C27}
			\kwd{37B20}
			\kwd{60H10}
			\kwd[; secondary ]{34D20}
		\end{keyword}
		
		\begin{keyword}
			\kwd{fractional Brownian motion}
			\kwd{stochastic semi-linear evolution equation}
			\kwd{Quasi-periodic solution}
			\kwd{Poisson stable solution}
		\end{keyword}
		
	\end{frontmatter}
	
	\section*{Statements and Declarations}
	\begin{itemize}
		\item Ethical approval\\
		Not applicable.
		
		\item Competing interests\\
		The authors have no competing interests as defined by Springer, or other interests that might be perceived to influence the results and/or discussion reported in this paper.
		
		\item Authors' contributions\\
		X. Zhang wrote the main manuscript text and prepared Fig. $\ref{F.1}$ and Fig. $\ref{F.2}$. Y. Li and X. Yang provided ideas and explored specific research methods. All authors reviewed the manuscript.
		
		\item Funding\\
		The second author was supported by National Natural Science Foundation of China (Grant No. 12071175 and 12471183). The third author was supported by National Natural Science Foundation of China (Grant No. 12071175 and 12371191).
		
		\item Availability of data and materials\\
		Not applicable.
	\end{itemize}

	\section{Introduction}
	
	In this paper, we investigate the Poisson stability of solutions for stochastic evolution equations (SEEs) driven by fractional Brownian motion (fBm) in a separable Hilbert space $({\mathbb{H}}, \Vert \cdot \Vert_\mathbb{H})$, given by
	\begin{equation}\label{1.1}
		\begin{aligned}
			\mathrm{d} X(t)= \left( AX(t) + f(t,X(t)) \right) \mathrm{d}t + g\left( t \right)\mathrm{d}B^H_{Q}(t), 
		\end{aligned}
	\end{equation}
	where $A$ is an infinitesimal generator which generates a $C_0$-semigroup $\left\{ U(t) \right\}_{t \geq 0}$ acting on ${\mathbb{H}}$; the functions $f \in C_b(\mathbb{R} \times \mathbb{H}, L^2(\mathbb{P}, \mathbb{H}))$ and $g \in C_b(\mathbb{R}, L^0_Q(\mathbb{U}, \mathbb{H}))$ represent bounded continuous mappings, with $C_b$ representing the space of bounded continuous mappings; the operator $Q$ is a non-negative, self-adjoint, bounded operator on $\mathbb{H}$, and $\{B^H_{Q}(t)\}_{t\geq0}$ is an $\mathbb{H}$-valued fBm.

	In the field of evolution equations that delineate the physical world, the recurrence of solutions is one of the most concerned topics. As Poincaré initially proposed, the Poisson stable solution or motion of the system expresses the most general recurrence. Poisson stable functions take various forms, including stationary, periodic \cite{11}, quasi-periodic \cite{12}, Bohr almost periodic \cite{17}, almost automorphic \cite{20}, Birkhoff recurrent \cite{21}, Levitan almost periodic \cite{22}, almost recurrent in the sense of Bebutov \cite{8}, pseudo-periodic \cite{25}, pseudo-recurrent \cite{1}, Poisson stable \cite{26} functions, among others.
	
	For the problems of Poisson stable solutions, B. A. Shcherbakov systematically studied the problem of existence of Poisson stable solutions of the equation
	\begin{equation}\label{1.2}
		\begin{aligned}
			\dot{x} = f(t, x), \quad x\in \mathcal{B},
		\end{aligned}
	\end{equation}
	where the function $f$ is Poisson stable in $t \in \mathbb{R}$ uniformly with respect to $x$ on every compact subset from Banach space $\mathcal{B}$.
	
	In his works \cite{25,1,26,7,28,27}, B. A. Shcherbakov established a method of comparability of functions by character of their recurrence. He proved that, under certain conditions, at least one solution of equation $\eqref{1.2}$
	shares the recurrence properties of the function $f$, coining such solutions as compatible (or uniformly compatible). The insights of Shcherbakov have since been expanded upon by various authors, as seen in references \cite{29,62,34,32}.
	
	In recent years, one has also begun to study the Poisson stable solutions for SEEs. Now let us recall some work, which are closely relevant to this paper: Cheban and Liu \cite{38} studied the problem of Poisson stability for the semi-linear SDEs and discussed the asymptotic stability of these Poisson stable solutions; Riccardo Montalto \cite{61} established the existence and stability of small-amplitude, time-quasi-periodic solutions for the Navier-Stokes equations under a perturbative external force, within high Sobolev norms. Akebat et al. \cite{39} obtained the existence and uniqueness of almost periodic solutions in distribution to affine SDE driven by fBm; Lu and Yang \cite{40} proved the existence of Poisson stable solutions for stochastic functional evolution equations with infinite delay; Damanik et al. \cite{60} demonstrated the existence and uniqueness of spatially quasi-periodic solutions to the $p$-generalized KdV equation on the real line with quasi-periodic initial data. Other research and progress can be found in \cite{63,48,66}.
	
	Despite the fBm being a significant extension of classical Brownian motion, offering the distinct advantage of more accurately depicting processes in the real world with long-term memory and self-similarity, research into the Poisson stable solutions for SEEs driven by fBm is notably sparse. When systems are subjected to "rough" external disturbances, the mentioned properties, along with the Hölder continuity of its paths, naturally facilitate the use of fBm in modeling complex phenomena, including economics \cite{50}, solid wave motion \cite{51}, mathematical finance \cite{52}, biological motivation \cite{54}, and so on.

	This paper aims to explore the Poisson stability of solutions to SEEs driven by fBm, a topic previously confined to the study of periodic and almost periodic solutions \cite{39}. Building on the approach introduced in \cite{38}, we extend the investigation to include all types of recurrence discussed above. Notably, to the best of our knowledge, the results presented here are novel.
	
	One of the key challenges in this context is the absence of martingale and Markov properties in fBm. To address this, we developed a new mathematical framework by defining a novel space, norm, and the concept of $B^H_Q(t)$. These tools provide a more comprehensive description of the system's dynamics, effectively addressing the limitations of traditional methods when dealing with fBm-driven systems.
	
	From a theoretical perspective, we established specific criteria involving parameters, including the function $g$, to ensure the existence and uniqueness of solutions to the equation. By employing Shcherbakov's framework, the classical comparison principle, and various inequality techniques such as Hölder's inequality, Gronwall's inequality, and the Hardy-Littlewood inequality, we successfully addressed the non-standard properties of fBm. This approach not only guarantees the stability of the solutions but also reveals their long-term recurrence properties.

	The structure of the paper is as follows. In Section 2, we collect some known notions and facts. Namely we give some necessary preliminaries in the stochastic integration with respect to fBm, and we present the definitions of all important classes of Poisson stable functions and their basic properties. We also give a short survey of Shcherbakov’s results on comparability of functions by character of their recurrence.  In Section 3, we study Poisson stable solutions for stochastic semi-linear evolution equations. In Section 4, we dedicate to studying the dissipativity and the convergence of solutions to stochastic semi-linear evolution equations. In the last section, we provided an example to illustrate our main results.

	\section{Preliminaries}
	
	\subsection{The Spaces $C(\mathbb{R}, \mathbb{X})$ and $BUC(\mathbb{R} \times \mathbb{X}, \mathbb{X})$}
	
	Let $(\mathbb{X}, \rho)$ be a complete metric space. The space $C(\mathbb{R}, \mathbb{X})$ denotes the set of all continuous functions $\phi : \mathbb{R} \to \mathbb{X}$, equipped with the metric
	\[
	d(\phi_1, \phi_2) := \sup_{T > 0} \min \left\{ \max_{|t| \leq T}\rho(\phi_1(t), \phi_2(t)), \frac{1}{T} \right\}, \quad \phi_1, \phi_2 \in C(\mathbb{R}, \mathbb{X}).
	\]
	It is well-known that $(C(\mathbb{R}, \mathbb{X}), d)$ is a complete metric space (see [\citenum{1}, ChI] for details). Unless stated otherwise, convergence in $C(\mathbb{R}, \mathbb{X})$ refers to convergence with respect to this metric.
	
	\begin{claim}\label{remark 2.2}
		The following statements hold:
		\begin{enumerate}
			\item The metric $d$ induces the compact-open topology on $C(\mathbb{R}, \mathbb{X})$.
			\item The following conditions are equivalent:
			\begin{itemize}
				\item[(1)] $d(\phi_n, \phi) \to 0$ as $n \to \infty$;
				\item[(2)] $\lim\limits_{n \to \infty} \max\limits_{|t| < T} \rho(\phi_n(t), \phi(t)) = 0$ for every $T > 0$;
				\item[(3)] There exists a sequence $T_n \to \infty$ such that $\lim\limits_{n \to \infty} \max\limits_{|t| < T_n} \rho(\phi_n(t), \phi(t)) = 0$.
			\end{itemize}
		\end{enumerate}
	\end{claim}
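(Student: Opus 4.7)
The plan is to prove the three-way equivalence in item 2 of the claim first, and then deduce item 1 as a corollary. Writing $M_T(\phi_n, \phi) := \max_{|t| \leq T} \rho(\phi_n(t), \phi(t))$, the guiding observation is that each term $\min\{M_T, 1/T\}$ in the definition of $d$ is dominated by $M_T$ for small $T$ and by $1/T$ for large $T$, so the supremum over $T > 0$ behaves like a balance between a uniform-on-compacts distance and a tail weight. Since $\phi_n$ and $\phi$ are continuous, $\max_{|t| \leq T}\rho(\phi_n(t),\phi(t)) = \sup_{|t| < T}\rho(\phi_n(t),\phi(t))$, so conditions (2)--(3) can equivalently be read with the closed interval used in the definition of $d$.

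For (1) $\Rightarrow$ (2), I would fix $T_0 > 0$ and choose $n$ so large that $d(\phi_n, \phi) < 1/T_0$; the inequality
\[
\min\{M_{T_0}(\phi_n, \phi),\, 1/T_0\} \leq d(\phi_n, \phi) < 1/T_0
\]
forces the minimum to be attained by $M_{T_0}(\phi_n, \phi)$, hence $M_{T_0}(\phi_n, \phi) \leq d(\phi_n, \phi) \to 0$. For (2) $\Rightarrow$ (3) a standard diagonal argument suffices: for each $k \in \mathbb{N}$ pick $N_k$ with $M_k(\phi_n,\phi) < 1/k$ whenever $n \geq N_k$ (WLOG $N_1 < N_2 < \cdots$), and define $T_n := \max\{k : N_k \leq n\}$; then $T_n \to \infty$ and $M_{T_n}(\phi_n,\phi) < 1/T_n \to 0$.

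The main obstacle is (3) $\Rightarrow$ (1), because one must control $\min\{M_T, 1/T\}$ uniformly in $T > 0$ rather than pointwise. Given $\varepsilon > 0$, I would pick $T^* > 1/\varepsilon$. For $T \geq T^*$ the tail weight alone yields $\min\{M_T, 1/T\} \leq 1/T \leq 1/T^* < \varepsilon$. For $T \in (0, T^*]$, I exploit monotonicity of $M_T$ in $T$: once $n$ is large enough that $T_n \geq T^*$ and $M_{T_n}(\phi_n,\phi) < \varepsilon$, we have $M_T \leq M_{T^*} \leq M_{T_n} < \varepsilon$. Taking the supremum over $T > 0$ then gives $d(\phi_n,\phi) \leq \varepsilon$.

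For item 1, I would invoke the standard fact that on $C(\mathbb{R}, \mathbb{X})$ with $\mathbb{X}$ metric, the compact-open topology coincides with uniform convergence on compact sets; since every compact $K \subset \mathbb{R}$ is contained in some $[-T, T]$, sequential convergence in the compact-open topology is exactly condition (2). Combined with $(1) \Leftrightarrow (2)$ and the first countability (indeed metrizability) of the compact-open topology on $C(\mathbb{R}, \mathbb{X})$, agreement of convergent sequences is enough to conclude that $d$ metrizes the compact-open topology.
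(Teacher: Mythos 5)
Your argument is correct. The paper itself offers no proof of this claim: it is stated as a known fact about the Bebutov metric, with the completeness of $(C(\mathbb{R},\mathbb{X}),d)$ attributed to Shcherbakov's monograph [1, Ch.~I], and the two items of the claim left as standard properties of that metric. Your proposal therefore supplies a self-contained verification where the paper relies on a citation, and all the delicate points are handled properly: the reduction of the open-interval suprema in (2)--(3) to the closed-interval maxima $M_T$ via continuity of $t\mapsto\rho(\phi_n(t),\phi(t))$; the observation in $(1)\Rightarrow(2)$ that $\min\{M_{T_0},1/T_0\}<1/T_0$ forces the minimum to equal $M_{T_0}$; the monotonicity of $T\mapsto M_T$, which is exactly what makes the uniform-in-$T$ estimate in $(3)\Rightarrow(1)$ work after splitting at $T^*>1/\varepsilon$; and, for item 1, the appeal to metrizability of the compact-open topology on $C(\mathbb{R},\mathbb{X})$ (valid since $\mathbb{R}$ is hemicompact and $\mathbb{X}$ is metric), without which agreement of convergent sequences would not suffice to identify the two topologies. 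The only cosmetic remark is that in $(2)\Rightarrow(3)$ your $T_n:=\max\{k:N_k\le n\}$ is undefined for $n<N_1$; setting $T_n:=1$ there, or restricting to large $n$, disposes of this.
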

	
	Let $BUC(\mathbb{R} \times \mathbb{X}, \mathbb{X})$ denote the space of all functions $F : \mathbb{R} \times \mathbb{X} \to \mathbb{X}$ that satisfy the following conditions:
	\begin{enumerate}
		\item $F$ is continuous in $t$, uniformly with respect to $x$ on every bounded subset $D \subseteq \mathbb{X}$;
		\item $F$ is bounded on every bounded subset of $\mathbb{R} \times \mathbb{X}$.
	\end{enumerate}
	
	For $F, G \in BUC(\mathbb{R} \times \mathbb{X}, \mathbb{X})$, consider a sequence of bounded subsets $\left\{ D_n \right\} \subseteq \mathbb{X}$ such that $D_n \subset D_{n + 1}$ for all $n \in \mathbb{N}$ and $\mathbb{X} = \bigcup\limits_{n \geq 1} D_n$. Define the metric
	\begin{equation}\label{2.5}
		d_{BUC}(F, G) := \sum_{n = 1}^{\infty} \frac{1}{2^n} \frac{d_n(F, G)}{1 + d_n(F, G)},
	\end{equation}
	where $d_n(F, G) := \sup\limits_{|t| \leq n, x \in D_n} \rho(F(t, x), G(t, x))$. It can be shown that $(BUC(\mathbb{R} \times \mathbb{X}, \mathbb{X}), d_{BUC})$ is a complete metric space, and that $d_{BUC}(F_n, F) \to 0$ if and only if $F_n(t, x) \to F(t, x)$ uniformly on every bounded subset of $\mathbb{R} \times \mathbb{X}$.
	
	Given $F \in BUC(\mathbb{R} \times \mathbb{X}, \mathbb{X})$ and $\tau \in \mathbb{R}$, define the translation $F^{\tau}$ by $F^{\tau}(t, x) := F(t + \tau, x)$ for $(t, x) \in \mathbb{R} \times \mathbb{X}$. Let $\bar{H}(F) := \overline{\left\{ F^{\tau} : \tau \in \mathbb{R} \right\}}$ represent the hull of F, where the closure is taken under the metric $d_{BUC}$ given by \eqref{2.5}. The mapping $\sigma: \mathbb{R} \times BUC(\mathbb{R} \times \mathbb{X}, \mathbb{X}) \to BUC(\mathbb{R} \times \mathbb{X}, \mathbb{X})$ defined by $\sigma(\tau, F) := F^{\tau}$ forms a dynamical system, satisfying $\sigma(0, F) = F$, $\sigma(\tau_1 + \tau_2, F) = \sigma(\tau_2, \sigma(\tau_1, F))$, and continuity in both arguments (see \cite{9} for details).
	
	Finally, let $BC(\mathbb{X}, \mathbb{X})$ denote the space of all bounded, continuous functions $F : \mathbb{X} \to \mathbb{X}$. For any $F, G \in BC(\mathbb{X}, \mathbb{X})$, define the metric
	\[
	d_{BC}(F, G) := \sum_{n = 1}^{\infty} \frac{1}{2^n} \frac{d_n(F, G)}{1 + d_n(F, G)},
	\]
	where $d_n(F, G) := \sup\limits_{x \in D_n} \rho(F(x), G(x))$. The space $(BC(\mathbb{X}, \mathbb{X}), d_{BC})$ is also a complete metric space.
	
	\subsection{Fractional Brownian Motion}
	
	We begin by introducing the fBm and the associated Wiener integral over the interval $[0, t]$. This definition will serve as the foundation for extending the integral to the interval $(- \infty, t]$, ensuring consistency and correctness in the extension process. Let $(\Omega, \mathcal{F}, \{\mathcal{F}_t\}_{t \geq 0}, \mathbb{P})$ be a complete filtered probability space. A two-sided one-dimensional fractional Brownian motion $\beta^H(t), t \in \mathbb{R}$, is a centered Gaussian process with covariance function
	\begin{displaymath}
		R_H(t, s) = \mathbb{E}[\beta^H(t)\beta^H(s)] = \frac{1}{2}(|t|^{2H} + |s|^{2H} - |t-s|^{2H}), \quad t, s \in \mathbb{R},
	\end{displaymath}
	where $H \in (0, 1)$ is the Hurst parameter, which characterizes the "memory" of the process.
	
	First, we define the Wiener integral with respect to one-dimensional fBm $\beta^H$ over the finite interval $[0, T]$. Let $T > 0$, and let $\Lambda$ denote the linear space of $\mathbb{R}$-valued step functions on $[0, T]$, given by
	\begin{displaymath}
		\varphi(t) = \sum_{i=1}^{n-1} x_i I_{[t_i, t_{i+1}]}(t),
	\end{displaymath}
	where $x_i \in \mathbb{R}, 0 = t_1 < t_2 < \cdots < t_n = T$, and $I$ is the indicator function. The Wiener integral of the step function $\varphi$ with respect to $\beta^H$ is defined as
	\begin{displaymath}
		\int_{0}^{T} \varphi(s) \,{\rm d}\beta^H(s) = \sum_{i=1}^{n-1} x_i (\beta^H(t_{i+1}) - \beta^H(t_i)).
	\end{displaymath}
	Let $\mathbb{U}$ be the Hilbert space defined as the closure of $\Lambda$ with respect to the inner product
	\begin{displaymath}
		\langle I_{[0,t]}, I_{[0,s]} \rangle_{\mathbb{U}} = R_H(t,s).
	\end{displaymath}
	The mapping
	\begin{displaymath}
		\varphi = \sum_{i=1}^{n-1} x_i I_{[t_i, t_{i+1}]} \mapsto \int_{0}^{T} \varphi(s) \,{\rm d}\beta^H(s)
	\end{displaymath}
	is an isometry between $\Lambda$ and the linear span of $\{\beta^H(t) : t \in [0, T]\}$, which extends to an isometry between $\mathbb{U}$ and the first Wiener chaos of the fBm $\overline{\text{span}}^{L^2(\Omega)} \{\beta^H(t) : t \in [0, T]\}$ (see \cite{2} for details). The image of an element $\phi \in \mathbb{U}$ under this isometry is called the Wiener integral of $\phi$ with respect to $\beta^H$.
	
	To give an explicit expression for this integral, consider the kernel function
	\begin{displaymath}
		K_H(t, s) = c_H s^{\frac{1}{2} - H} \int_{s}^{t} (u - s)^{H - \frac{3}{2}} u^{H - \frac{1}{2}} \,{\rm d}u, \quad t > s,
	\end{displaymath}
	where $c_H = \left(\frac{H(2H - 1)}{B(2 - 2H, H - \frac{1}{2})}\right)^{\frac{1}{2}}$, and $B$ denotes the Beta function. We can show that
	\begin{displaymath}
		\frac{\partial K_H}{\partial t}(t, s) = c_H \left(\frac{t}{s}\right)^{H - \frac{1}{2}} (t - s)^{H - \frac{3}{2}}.
	\end{displaymath}
	Define the linear operator $K_H^{\ast} : \Lambda \to L^2([0, T])$ as
	\begin{displaymath}
		(K_H^{\ast} \varphi)(s) = \int_{s}^{T} \varphi(t) \frac{\partial K_H}{\partial t}(t, s) \,{\rm d}t.
	\end{displaymath}
	Thus,
	\begin{displaymath}
		(K_H^{\ast} I_{[0,t]})(s) = K_H(t,s) I_{[0,t]}(s),
	\end{displaymath}
	and $K_H^{\ast}$ is an isometry between $\Lambda$ and $L^2([0, T])$, extendable to $\mathbb{U}$ (see \cite{3} for details).
	
	Now, consider the process $W = \{W(t), t \in [0, T]\}$, defined by
	\begin{displaymath}
		W(t) = \beta^H \left((K_H^{\ast})^{-1} I_{[0,t]}\right).
	\end{displaymath}
	One can show that $W$ is a Wiener process and that $\beta^H$ has the following Wiener integral representation:
	\begin{displaymath}
		\beta^H(t) = \int_{0}^{t} K_H(t,s) \,{\rm d}W(s).
	\end{displaymath}
	Moreover, for any $\phi \in \mathbb{U}$,
	\begin{displaymath}
		\int_{0}^{T} \phi(s) \,{\rm d}\beta^H(s) = \int_{0}^{T} (K_H^{\ast} \phi)(t) \,{\rm d}W(t),
	\end{displaymath}
	provided that $K_H^{\ast} \phi \in L^2([0, T])$.
	
	Let $L^2_{\mathbb{U}}([0, T]) = \{\phi \in \mathbb{U} : K_H^{\ast} \phi \in L^2([0, T])\}$. If $H > \frac{1}{2}$,  it follows from \cite{4} that
	\begin{equation}\label{2.1}
		L^{\frac{1}{H}}([0,T]) \subset L^2_{\mathbb{U}}([0,T]).
	\end{equation}
	
	After defining the stochastic integral on $[0, t]$, we extend it to $(- \infty, t]$ to accommodate broader applications while ensuring convergence and consistency. To maintain a well-defined integral over $(- \infty, t]$, the integrand $f$ must satisfy certain decay conditions as $s \to -\infty$, similar to the exponential decay seen in this paper with $e^{-\alpha (t - s)}$. Let $f : (-\infty, t] \to L^0_Q(\mathbb{U}, \mathbb{H})$ be a function that satisfies the following integrability condition:
	\begin{displaymath}
		\sum_{n=1}^{\infty} \int_{-\infty}^{t} |K^{\ast}_H(f Q^{\frac{1}{2}} e_n)(s)|^2 \, \mathrm{d}s < \infty.
	\end{displaymath}
	The stochastic integral of $f$ with respect to $B^H_Q(t)$ over $(- \infty, t]$ is then defined as:
	\begin{displaymath}
		\int_{-\infty}^{t} f(s) \, \mathrm{d}B^H_Q(s) := \sum_{n=1}^{\infty} \int_{-\infty}^{t} f(s) Q^{\frac{1}{2}} e_n \, \mathrm{d}\beta^H_n(s).
	\end{displaymath}
	This formulation ensures that the integral is well-defined for a broad class of functions, including those that exhibit the appropriate decay behavior as $s \to -\infty$. Such decay ensures the convergence of the integral even over the unbounded domain $(- \infty, t]$.
	
	Similar to the finite interval case, the integral can also be written as:
	\begin{displaymath}
		\int_{-\infty}^{t} f(s) \, \mathrm{d}B^H_Q(s) = \sum_{n=1}^{\infty} \int_{-\infty}^{t} K_H^{\ast}(f Q^{\frac{1}{2}}e_n)(s) \, \mathrm{d}W(s),
	\end{displaymath}
	where $K_H^{\ast}$ is the same kernel operator used for the finite interval case, and $W(s)$ is a Wiener process.
	
	By analogy with Theorem 1.1 in \cite{5}, we can similarly extend and derive the following lemma:
	
	\begin{lemma}\label{lemma 2.4}
		For any $\phi \in L^{\frac{1}{H}}((-\infty, T])$, we have
		\begin{displaymath}
			H(2H - 1)\int_{-\infty}^{T} \int_{-\infty}^{T} |\phi(r)| |\phi(u)| |r - u|^{2H - 2} \,{\rm d}r \,{\rm d}u \leq C_H \|\phi\|^2_{L^{\frac{1}{H}}((-\infty,T])},
		\end{displaymath}
	\end{lemma}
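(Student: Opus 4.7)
The plan is to recognize the left-hand side as a Riesz-type energy integral and invoke the Hardy--Littlewood--Sobolev (HLS) inequality on $\mathbb{R}$. Since the factor $H(2H-1)$ is meaningful (positive) only when $H>1/2$, which is the regime where the surrounding inclusion \eqref{2.1} lives, the exponent $2-2H$ lies in $(0,1)$, so the kernel $|r-u|^{2H-2}=|r-u|^{-(2-2H)}$ is an integrable singular kernel on the diagonal. I would first extend $|\phi|$ to all of $\mathbb{R}$ by zero outside $[0,T]$, which leaves the double integral unchanged and puts it in standard HLS form.

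Next, with the parameters $n=1$, $\lambda=2-2H$, and $p=q=1/H$, I would verify the HLS scaling relation $\tfrac{1}{p}+\tfrac{1}{q}+\tfrac{\lambda}{n}=H+H+(2-2H)=2$ and the admissibility condition $0<\lambda<1$. The classical HLS inequality then yields
$$\int_{\mathbb{R}}\int_{\mathbb{R}}\frac{|\phi(r)||\phi(u)|}{|r-u|^{2-2H}}\,{\rm d}r\,{\rm d}u \leq C_{H}\,\|\phi\|_{L^{1/H}(\mathbb{R})}^{2} = C_{H}\,\|\phi\|_{L^{1/H}([0,T])}^{2},$$
for a constant $C_H$ depending only on $H$. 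Multiplying by $H(2H-1)$ delivers the stated bound up to a constant depending only on $H$.

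The main obstacle is pinning down the constant so that it matches the specific $c_H$ used in the paper's normalization of the kernel $K_H$, rather than the abstract HLS constant. To do this cleanly, I would side-step HLS and argue via the Wiener integral isometry: for $\phi\geq 0$ (replace $\phi$ by $|\phi|$ first), Fubini and the identity $\partial_{t}K_{H}(t,s)=c_{H}(t/s)^{H-1/2}(t-s)^{H-3/2}$ give
$$\|K_{H}^{\ast}\phi\|_{L^{2}([0,T])}^{2} = H(2H-1)\int_{0}^{T}\!\int_{0}^{T}\phi(r)\phi(u)|r-u|^{2H-2}\,{\rm d}r\,{\rm d}u,$$
so the lemma reduces to the estimate $\|K_{H}^{\ast}\phi\|_{L^{2}([0,T])}^{2}\leq c_{H}\|\phi\|_{L^{1/H}([0,T])}^{2}$. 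This last bound is then obtained by a direct Minkowski/Hölder argument on $(K_{H}^{\ast}\phi)(s)=\int_{s}^{T}\phi(t)\partial_{t}K_{H}(t,s)\,{\rm d}t$, exploiting the explicit expression of $c_{H}$ in terms of the Beta function $B(2-2H,H-1/2)$ to absorb all the constants produced by the change of variables.
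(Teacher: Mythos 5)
The paper offers no proof of this lemma: it is quoted directly from reference \cite{5}, and in that literature (M\'emin--Mishura--Valkeila and its descendants) the proof is exactly your first route, the Hardy--Littlewood--Sobolev inequality with $n=1$, $\lambda=2-2H\in(0,1)$, $p=q=1/H$ and the zero extension of $|\phi|$ off $[0,T]$; so your primary argument is correct and coincides with the standard proof behind the citation. Your second route, however, does not repair the constant issue you raise: the identity $\|K_H^\ast\phi\|_{L^2([0,T])}^2=H(2H-1)\int_0^T\!\int_0^T\phi(r)\phi(u)|r-u|^{2H-2}\,\mathrm{d}r\,\mathrm{d}u$ is nothing but the statement that $K_H^\ast$ is an isometry from $(\mathbb{U},\langle\cdot,\cdot\rangle_{\mathbb{U}})$ into $L^2$, i.e.\ it merely renames the left-hand side of the lemma, so ``reducing'' to $\|K_H^\ast\phi\|_{L^2}^2\le c_H\|\phi\|_{L^{1/H}}^2$ is circular and you still need an HLS-type estimate to finish; moreover the $c_H=\bigl(H(2H-1)/B(2-2H,H-\tfrac12)\bigr)^{1/2}$ appearing in the statement is the kernel-normalization constant (chosen so that $\langle K_H^\ast 1_{[0,t]},K_H^\ast 1_{[0,s]}\rangle_{L^2}=R_H(t,s)$), and a Minkowski/H\"older computation on $\int_s^T\phi(t)\,\partial_tK_H(t,s)\,\mathrm{d}t$ will not reproduce it. Since the paper only ever uses this bound with a generic $H$-dependent constant (it is absorbed into $C_H$ in Theorem \ref{theorem 3.2}), the HLS version you prove is all that is actually needed.
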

	 where $C_H$ is a positive constant depending on $H$.
	\begin{proof}
		Define the following two quantities:
		\begin{displaymath}
			\begin{aligned}
				&(I_{\alpha} \phi)(t) := \frac{1}{\Gamma(\alpha)} \int_{-\infty}^t (t - s)^{\alpha - 1} \phi(s) \, ds,
				\\
				&U(\phi) := H(2H - 1)\int_{-\infty}^{T} \int_{-\infty}^{T} |\phi(r)| |\phi(u)| |r - u|^{2H - 2} \,{\rm d}r \,{\rm d}u.
			\end{aligned}
		\end{displaymath}
		Applying Hölder's inequality, we obtain
		\begin{displaymath}
			U(\phi) \leq \alpha_H \left( \int_{-\infty}^T |\phi_r|^{\frac{1}{H}} dr \right)^H \left( \int_{-\infty}^T \left( \int_{-\infty}^T |\phi_u||r-u|^{2H-2} du \right)^{\frac{1}{1-H}} dr \right)^{1-H}.
		\end{displaymath}
		The second factor in the above expression, up to a multiplicative constant, is equal to the $\frac{1}{1-H}$ norm of the left-sided fractional integral $I_{\alpha} \phi$. Finally it suffices to apply the Hardy-Littlewood inequality (see \cite{65}, Theorem 1, p.119)
		\begin{displaymath}
			\|I_{\alpha} f\|_{L^{q}(-\infty,T)} \leq C(p,q,\alpha) \|f\|_{L^p(-\infty,T)},
		\end{displaymath}
		where $ 0 < \alpha < 1 $, and $ 1 < p < q < \infty $ satisfy the relation $ \frac{1}{q} = \frac{1}{p} - \alpha $. For the specific case of interest, we have $ \alpha = 2H - 1 $, $ q = \frac{1}{1 - H} $, and $ p = \frac{1}{H} $.
	\end{proof}
	
	Next, we consider an fBm with values in a Hilbert space and define the corresponding stochastic integral. Let $(\mathbb{U}, |\cdot|_\mathbb{U}, (\cdot,\cdot)_\mathbb{U})$ and $(\mathbb{H}, |\cdot|_\mathbb{H} , (\cdot,\cdot)_\mathbb{H})$ be two separable Hilbert spaces. Let $L(\mathbb{U}, \mathbb{H})$ denote the space of all bounded linear operators from $\mathbb{U}$ to $\mathbb{H}$, and let $Q \in L(\mathbb{U}, \mathbb{U})$ be a non-negative self-adjoint operator. Define $L^{\infty}_Q (\mathbb{U}, \mathbb{H})$ as the space of all $\xi \in L(\mathbb{U}, \mathbb{H})$ such that $\xi Q^{\frac{1}{2}}$ is a Hilbert-Schmidt operator, with norm
	\begin{displaymath}
		|\xi|^2_{L^0_Q (\mathbb{U}, \mathbb{H})} = |\xi Q^{\frac{1}{2}}|^2_{HS} = \text{tr}(\xi Q \xi^{\ast}).
	\end{displaymath}
	Such an operator $\xi : \mathbb{U} \to \mathbb{H}$ is called a $Q$-Hilbert-Schmidt operator.
	
	Let $\{\beta^H_n(t)\}_{n \in \mathbb{N}}$ be a sequence of independent two-sided one-dimensional standard fBm, and let $\{e_n\}_{n \in \mathbb{N}}$ be a complete orthonormal basis in $\mathbb{U}$. The series
	\begin{displaymath}
		\sum_{n = 1}^{\infty} \beta^H_n(t) e_n, \quad t \geq 0,
	\end{displaymath}
	does not necessarily converge in $\mathbb{U}$. Thus, we consider a $\mathbb{U}$-valued stochastic process $B^H_Q(t)$ given formally by
	\begin{equation}\label{fbm}
		B^H_Q(t) = \sum_{n = 1}^{\infty} \beta^H_n(t) Q^{\frac{1}{2}} e_n, \quad t \geq 0.
	\end{equation}
	When $Q$ is a non-negative self-adjoint trace class operator, this series converges in $\mathbb{U}$, and we define $B^H_Q(t)$ as a $\mathbb{U}$-valued $Q$-cylindrical fBm with covariance operator $Q$.
	
	Let $f: (-\infty, T] \to L^0_Q (\mathbb{U}, \mathbb{H})$ satisfy
	\begin{equation}\label{2.3}
		\sum_{n = 1}^{\infty} \|K^{\ast}_H(f Q^{\frac{1}{2}} e_n)\|_{L^2((-\infty,T];\mathbb{H})} < \infty.
	\end{equation}
	\begin{definition}
		If $f : (-\infty, T] \to L^0_Q (\mathbb{U}, \mathbb{H})$ satisfies \eqref{2.3}, its stochastic integral with respect to $B^H_Q$ is defined by
		\[
		\int_{-\infty}^{t} f(s) \,{\rm d}B^H_Q(s) := \sum_{n = 1}^{\infty} \int_{-\infty}^{t} f(s) Q^{\frac{1}{2}} e_n \,{\rm d}\beta^H_n(s) = \sum_{n = 1}^{\infty} \int_{-\infty}^{t} (K^{\ast}_H(f Q^{\frac{1}{2}}e_n))(s) \,{\rm d}W(s), \quad t \geq 0.
		\]
		If
		\begin{equation}\label{2.4}
			\sum_{n = 1}^{\infty} \|f Q^{\frac{1}{2}} e_n\|_{L^{\frac{1}{H}}((-\infty, T]; \mathbb{H})} < \infty,
		\end{equation}
		then \eqref{2.3} follows immediately from an extension of \eqref{2.1}.
	\end{definition}

	\subsection{Poisson stable functions}
	Let us recall the types of Poisson stable functions to be studied in this paper. For more details on these functions, see \cite{6, 7, 8}.
	\begin{definition}
		We say that a function $\varphi \in C(\mathbb{R},\mathbb{X})$ is stationary (respectively, $\tau$-periodic), if $\varphi(t) =\varphi(0)$ ${\rm (}$respectively, $\varphi(t + \tau) = \varphi(t)$${\rm )}$ for all $t \in \mathbb{R}$.
	\end{definition}
	
	\begin{definition}
		Let $\epsilon > 0$. We say that a number $\tau \in \mathbb{R}$ is $\epsilon$-almost period of the function $\varphi$, if $\rho\left(\varphi\left(t + \tau\right), \varphi(t)\right) < \epsilon$ for all $t \in \mathbb{R}$. Denote by $\mathcal{T} (\varphi, \epsilon)$ the set of $\epsilon$-almost periods of $\varphi$.
	\end{definition}
	
	\begin{definition}
		We say that a function $\varphi \in C(\mathbb{R}, \mathbb{X})$ is Bohr almost periodic, if the set of $\epsilon$-almost periods of $\varphi$ is relatively dense for each $\epsilon > 0$, i.e. for each $\epsilon > 0$ there exists $l = l(\epsilon) > 0$ such that $\mathcal{T} (\varphi, \epsilon) \cap [a, a + l]  \neq \emptyset$ for all $a \in \mathbb{R}$.
	\end{definition}
	
	\begin{definition}
		We say that a function $\varphi \in C(\mathbb{R}, \mathbb{X})$ is pseudo-periodic in the positive ${\rm (}$respectively, negative${\rm )}$ direction, if for each $\epsilon > 0$ and $l > 0$ there exists an $\epsilon$-almost period $\tau > l$ (respectively, $\tau < - l$) of the function $\varphi$. We say that a function $\varphi$ is pseudo-periodic if it is pseudo-periodic in both directions.
	\end{definition}
	
	\begin{definition}
		For given $\varphi \in C(\mathbb{R}, \mathbb{X})$, let $\varphi^h$ represent the $h$-translation of $\varphi$, i.e. $\varphi^h(t) = \varphi(h + t)$ for all $t \in \mathbb{R}$. The hull of $\varphi$, denoted by $\bar{H}(\varphi)$, is the set of all the limits of $\varphi^{h_n}$ in $C(\mathbb{R}, \mathbb{X})$, i.e.
		\begin{displaymath}
			\bar{H}(\varphi) := \left\{  \psi \in C(\mathbb{R}, \mathbb{X}) : \psi = \lim\limits_{n \to \infty} \varphi^{h_n} ~\text{for some sequence}~ \left\{ h_n \right\} \subset \mathbb{R} \right\}.
		\end{displaymath}

	\end{definition}
	It is well-known that the mapping $\pi(h, \varphi) := \varphi^h$ is a dynamical system from $\mathbb{R} \times C(\mathbb{R}, \mathbb{X})$ to $C(\mathbb{R}, \mathbb{X})$, i.e. $\pi(0, \varphi) = \varphi$, $\pi(h_1 + h_2, \varphi) = \pi(h_2,\pi(h_1,\varphi))$ and the mapping $\pi$ is continuous (see \cite{9}). In particular, the mapping $\pi$ restricted to $\mathbb{R} \times \bar{H}(\varphi)$ is a dynamical system.
	
	\begin{definition}
		We say that a number $\tau \in \mathbb{R}$ is an $\epsilon$-shift for $\varphi \in C(\mathbb{R}, \mathbb{X})$ if $d(\varphi^{\tau}, \varphi) < \epsilon$.
	\end{definition}
	
	\begin{definition}
		We say that a function $\varphi \in C(\mathbb{R}, \mathbb{X})$ is almost recurrent in the sense of Bebutov, if for every $\epsilon > 0$ the set $\left\{ \tau : d(\varphi^{\tau}, \varphi) < \epsilon \right\}$ is relatively dense.
	\end{definition}
	
	\begin{definition}
		We say that a function $\varphi \in C(\mathbb{R}, \mathbb{X})$ is Lagrange stable, if $\left\{ \varphi^h : h \in \mathbb{R} \right\}$ is a relatively compact subset of $C(\mathbb{R}, \mathbb{X})$.
	\end{definition}
	
	\begin{definition}
		We say that a function $\varphi \in C(\mathbb{R}, \mathbb{X})$ is Birkhoff recurrent, if it is almost recurrent and Lagrange stable.
	\end{definition}
	
	\begin{definition}
		We say that a function $\varphi \in C(\mathbb{R}, \mathbb{X})$ is Poisson stable in the positive (respectively, negative) direction, if for every $\epsilon > 0$ and $l > 0$ there exists $\tau > l$ ${\rm (}$respectively, $\tau < - l$${\rm )}$ such that $d(\varphi^{\tau}, \varphi) < \epsilon$. We say that a function $\varphi$ is Poisson stable if it is Poisson stable in both directions.
	\end{definition}

	In the following we will also denote $\mathbb{Y}$ as a complete metric space.
	
	\begin{definition}
		We say that a function $\varphi \in C(\mathbb{R}, \mathbb{X})$ is Levitan almost periodic, if there exists a Bohr almost periodic function $\psi \in C(\mathbb{R}, \mathbb{Y})$ such that for any $\epsilon > 0$ there exists $\delta = \delta(\epsilon) > 0$ such that $d(\varphi^{\tau}, \varphi) < \epsilon$ for all $\tau \in \mathcal{T}(\psi, \delta)$, where $\mathcal{T}(\psi, \delta)$ denotes the set of $\delta$-almost periods of $\psi$.
	\end{definition}
	
	\begin{definition}
		We say that a function $\varphi \in C(\mathbb{R}, \mathbb{X})$ is Bohr almost automorphic, if it is Levitan almost periodic and Lagrange stable.
	\end{definition}
	
	\begin{definition}
		We say that a function $\varphi \in C(\mathbb{R}, \mathbb{X})$ is quasi-periodic with the spectrum of frequencies $v_1, v_2,..., v_k$, if the following conditions are fulfilled:
		\begin{itemize}
			\item[1.] The numbers $v_1, v_2,..., v_k$ are rationally independent;
			\item[2.] There exists a continuous function $\phi : \mathbb{R}^k \to \mathbb{X}$ such that $\phi(t_1 + 2\pi, t_2 + 2\pi,..., t_k + 2\pi) = \phi(t_1, t_2,..., t_k)$ for all $(t_1, t_2,..., t_k) \in \mathbb{R}^k$;
			\item[3.] $\varphi(t) = \phi(v_1t, v_2t,..., v_kt)$ for all $t \in \mathbb{R}$.
		\end{itemize}
	\end{definition}
	
	\begin{definition}
		We say that a function $\varphi \in C(\mathbb{R}, \mathbb{X})$ is pseudo-recurrent, if for any $\epsilon > 0$ and $l \in \mathbb{R}$ there exists $L \geq l$ such that for any $\tau_0 \in \mathbb{R}$ we can find a number $\tau \in [l,L]$ satisfying
		\begin{displaymath}
			\sup\limits_{t \leq \frac{1}{\epsilon}} \rho(\varphi(t + \tau_0 + \tau), \varphi(t + \tau_0)) \leq \epsilon.
		\end{displaymath}
	\end{definition}
	
	Finally, we note that all types of functions introduced above, except for Lagrange stable functions, are Poisson stable.

	\subsection{Shcherbakov’s comparability method}
	\begin{definition}
		Let $\varphi \in C(\mathbb{R}, \mathbb{X})$ and $\mathfrak{N}_{\varphi}$ (respectively, $\mathfrak{M}_{\varphi}$) represent the family of all sequences $\left\{ t_n \right\} \subset \mathbb{R}$ such that $\varphi^{t_n}$ converges to $\varphi$ (respectively, $\left\{\varphi^{t_n} \right\}$ converges) in $C(\mathbb{R}, \mathbb{X})$ as $n \to \infty$. Additionally, let $\mathfrak{N}_{\varphi}^u$ (respectively, $\mathfrak{M}_{\varphi}^u$) represent the family of all sequences $\left\{ t_n \right\} \in \mathfrak{N}_{\varphi}$ (respectively, $\mathfrak{M}_{\varphi}^u$) such that $\varphi^{t + t_n}$ converges to $\varphi^{t}$ (respectively, $\varphi^{t + t_n}$ converges) uniformly in $t \in \mathbb{R}$ as $n \to \infty$.
	\end{definition}
	\begin{definition}
		We say that a function $\varphi \in C(\mathbb{R}, \mathbb{X})$ is comparable ${\rm (}$respectively, uniformly comparable${\rm )}$ by character of recurrence with $\phi \in C(\mathbb{R}, \mathbb{Y})$ if $\mathfrak{N}_{\phi} \subseteq \mathfrak{N}_{\varphi}$ ${\rm (}$respectively, $\mathfrak{M}_{\phi} \subseteq \mathfrak{M}_{\varphi}$${\rm )}$.
	\end{definition}
	
	\begin{theorem}\label{theorem 2.23}
		([\citenum{1}, ChII]) The following statements hold:
		\begin{itemize}
			\item[1.] $\mathfrak{M}_{\phi} \subseteq \mathfrak{M}_{\varphi}$ implies $\mathfrak{N}_{\phi} \subseteq \mathfrak{N}_{\varphi}$, and hence uniform comparability implies comparability;
			\item[2.] Let $\varphi \in C(\mathbb{R}, \mathbb{X})$ be comparable by character of recurrence with $\phi \in C(\mathbb{R}, \mathbb{Y})$. If the function $\phi$ is stationary ${\rm (}$respectively, $\tau$-periodic, Levitan almost periodic, almost recurrent, Poisson stable${\rm )}$, then so is $\varphi$;
			\item[3.] Let $\varphi \in C(\mathbb{R}, \mathbb{X})$ be uniformly comparable by character of recurrence with $\phi \in C(\mathbb{R}, \mathbb{Y})$. If the function $\phi$ is quasi-periodic with the spectrum of frequencies $v_1, v_2,..., v_k$ ${\rm (}$respectively, Bohr almost periodic, Bohr almost automorphic, Birkhoff recurrent, Lagrange stable${\rm )}$, then so is $\varphi$;
			\item[4.] Let $\varphi \in C(\mathbb{R}, \mathbb{X})$ be uniformly comparable by character of recurrence with $\phi \in C(\mathbb{R}, \mathbb{Y})$ and $\phi$ be Lagrange stable. If $\phi$ is pseudo-periodic ${\rm (}$respectively, pseudo-recurrent${\rm )}$, then so is $\varphi$.
		\end{itemize}
	\end{theorem}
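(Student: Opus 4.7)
The plan is to treat the four parts in turn, with a unified strategy: transfer information about the convergence of translates from $\phi$ to $\varphi$ via the (uniform) comparability hypothesis, and then recognize each recurrence class as a condition on the sequence sets $\mathfrak{N}$, $\mathfrak{M}$ or their uniform counterparts $\mathfrak{N}^u$, $\mathfrak{M}^u$.

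For Part 1, given $\{t_n\} \in \mathfrak{N}_{\phi}$, the sequence lies in particular in $\mathfrak{M}_{\phi}$, hence by hypothesis in $\mathfrak{M}_{\varphi}$, so $\varphi^{t_n}$ converges to some $\psi \in \bar{H}(\varphi)$; the task is to identify $\psi = \varphi$. I would construct a well-defined map $h : \bar{H}(\phi) \to \bar{H}(\varphi)$ by $h(\tilde{\phi}) := \lim_n \varphi^{s_n}$ for any sequence with $\phi^{s_n} \to \tilde{\phi}$. Well-definedness is obtained by merging two candidate sequences $\{s_n\}, \{s_n'\}$ with the same $\phi$-limit into a single sequence, which remains in $\mathfrak{M}_{\phi} \subseteq \mathfrak{M}_{\varphi}$, so the two candidate $\varphi$-limits must coincide. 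Specializing to the constant sequence $s_n \equiv 0$ gives $h(\phi) = \varphi$, so that $\psi = h(\phi) = \varphi$.

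Part 2 follows a single contradiction scheme. The stationary and $\tau$-periodic cases are immediate by feeding constant sequences into $\mathfrak{N}_{\phi} \subseteq \mathfrak{N}_{\varphi}$. For Poisson stability, almost recurrence in the sense of Bebutov, and Levitan almost periodicity, I would assume $\varphi$ fails the property at some $\epsilon_0 > 0$ and build a sequence $\{t_n\}$ with $d(\varphi^{t_n}, \varphi) \geq \epsilon_0$ but $d(\phi^{t_n}, \phi) \to 0$; the latter is arranged using the recurrence of $\phi$ (for Levitan almost periodicity, by picking $t_n \in \mathcal{T}(\psi, \delta_n)$ with $\delta_n \to 0$, so that $d(\phi^{t_n}, \phi) \to 0$ by the Levitan hypothesis for $\phi$; for almost recurrence, by choosing $t_n$ in intervals on which the relative density for $\varphi$ fails but for which $\phi$ still produces an $1/n$-shift). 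Then $\{t_n\} \in \mathfrak{N}_{\phi} \subseteq \mathfrak{N}_{\varphi}$ would force $d(\varphi^{t_n}, \varphi) \to 0$, a contradiction.

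For Parts 3 and 4, I would first establish the uniform analogue of Part 1, namely that $\mathfrak{M}^u_{\phi} \subseteq \mathfrak{M}^u_{\varphi}$ implies $\mathfrak{N}^u_{\phi} \subseteq \mathfrak{N}^u_{\varphi}$, by the same hull-map argument with limits taken uniformly in $t \in \mathbb{R}$. Bohr almost periodicity, quasi-periodicity, Bohr almost automorphy, and Birkhoff recurrence then transfer by contradiction arguments parallel to those in Part 2, but now with $\epsilon$-shifts interpreted through uniform-in-$t$ estimates and $\mathfrak{N}^u$. Lagrange stability and the pseudo-periodic/pseudo-recurrent cases in Part 4 require an additional compactness input: given any sequence $\{t_n\}$, Lagrange stability of $\phi$ yields a subsequence along which translates of $\phi$ converge, which one upgrades to lie in $\mathfrak{M}^u_{\phi}$ and then invokes uniform comparability. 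The main technical obstacle I anticipate is precisely this upgrade: reconciling the compact-open metric $d$ used in the definitions of Poisson stability, almost recurrence, and Lagrange stability with the uniform-on-$\mathbb{R}$ convergence built into $\mathfrak{M}^u_{\phi}$. Handling this carefully — via continuity of the shift action on the compact hull $\bar{H}(\phi)$ and an equicontinuity-on-compacta argument — is what separates Parts 2 and 3--4, and is the step that requires the Lagrange-stable hypothesis explicitly in Part 4.
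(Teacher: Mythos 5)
First, a point of reference: the paper does not prove this theorem at all --- it is quoted from Shcherbakov's monograph \cite{1} --- so your proposal can only be measured against the classical argument there. Your Parts 1 and 2 reproduce that argument correctly. The merging trick that makes the hull map $h(\tilde{\phi}):=\lim_n\varphi^{s_n}$ well defined, together with $h(\phi)=\varphi$ (constant sequence), is exactly how $\mathfrak{M}_{\phi}\subseteq\mathfrak{M}_{\varphi}\Rightarrow\mathfrak{N}_{\phi}\subseteq\mathfrak{N}_{\varphi}$ is proved; and the contradiction scheme (pick $t_n$ with $d(\phi^{t_n},\phi)\to 0$ inside the sets where the corresponding property of $\varphi$ would fail) works for every class in Part 2 precisely because stationarity, periodicity, Levitan almost periodicity, almost recurrence and Poisson stability are all formulated in the compact-open metric $d$, hence are visible to $\mathfrak{N}_{\varphi}$.

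The gap is in Parts 3 and 4. You have substituted the wrong hypothesis: in this paper ``uniformly comparable'' means $\mathfrak{M}_{\phi}\subseteq\mathfrak{M}_{\varphi}$ (convergence of translates in the metric $d$), not $\mathfrak{M}^u_{\phi}\subseteq\mathfrak{M}^u_{\varphi}$; the implication $\mathfrak{M}^u_{\phi}\subseteq\mathfrak{M}^u_{\varphi}\Rightarrow\mathfrak{N}^u_{\phi}\subseteq\mathfrak{N}^u_{\varphi}$ that you propose to prove first is the content of the separate Lemma quoted from \cite{38}, not of item 3 here. More seriously, the step you explicitly defer --- passing from compact-open convergence of $\varphi^{t_n}$ to the uniform-in-$t$ statements that Bohr almost periodicity, quasi-periodicity, almost automorphy and pseudo-periodicity require --- is the entire content of Parts 3 and 4, and the upgrade you suggest (``Lagrange stability lets one put the sequence into $\mathfrak{M}^u_{\phi}$'') is false in general: translates of a Lagrange stable function that converge in $d$ need not converge uniformly on $\mathbb{R}$; uniform convergence of all such sequences is equivalent to equicontinuity of the hull flow, i.e.\ essentially to almost periodicity itself. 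The classical resolution is structural rather than metric: under the hypotheses of Parts 3 and 4 the hull $\bar{H}(\phi)$ is compact, the map $h$ from Part 1 is a continuous homomorphism of the shift dynamical systems from $\bar{H}(\phi)$ onto $\bar{H}(\varphi)$ with $h(\phi)=\varphi$, and each listed property is characterized by a property of the compact flow $(\bar{H}(\phi),\mathbb{R},\sigma)$ that passes to factors (compactness alone for Lagrange stability, minimality for Birkhoff recurrence, equicontinuous minimality for Bohr almost periodicity and quasi-periodicity, Levitan plus compactness for almost automorphy, and the corresponding flow descriptions of pseudo-periodic and pseudo-recurrent points for Part 4). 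Without this factor argument, or an equivalent replacement for it, your Parts 3 and 4 do not close.
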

	\begin{lemma}
		(\cite{38}) Let $\varphi \in C(\mathbb{R}, \mathbb{X})$, $\phi \in C(\mathbb{R}, \mathbb{Y})$ and $\mathfrak{M}^u_{\phi} \subseteq \mathfrak{M}^u_{\varphi}$. Then the following statements hold:
		\begin{itemize}
			\item[1.] $\mathfrak{N}^u_{\phi} \subseteq \mathfrak{N}^u_{\varphi}$;
			\item[2.] If the function $\phi$ is Bohr almost periodic, then $\varphi$ is also Bohr almost periodic.
		\end{itemize}
	\end{lemma}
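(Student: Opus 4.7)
My plan is to establish the two items independently. Item (1) needs an interleaving trick that promotes the hypothesis on $\mathfrak{M}^u$ to a statement about $\mathfrak{N}^u$, while item (2) follows directly from Bochner's criterion for Bohr almost periodicity combined with the inclusion $\mathfrak{M}^u_{\phi} \subseteq \mathfrak{M}^u_{\varphi}$.

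For (1), I would fix an arbitrary $\{t_n\} \in \mathfrak{N}^u_{\phi}$, so $\phi^{t_n} \to \phi$ uniformly on $\mathbb{R}$, and show that $\{t_n\} \in \mathfrak{N}^u_{\varphi}$. The subtlety is that the hypothesis only guarantees that $\varphi^{t_n}$ converges uniformly to \emph{some} limit $\chi$, not a priori to $\varphi$. To force the identification $\chi = \varphi$, I would introduce the interleaved sequence $s_{2k} := 0$, $s_{2k+1} := t_k$. The even shifts $\phi^{s_{2k}}$ are identically $\phi$, and the odd shifts $\phi^{s_{2k+1}} = \phi^{t_k}$ converge uniformly to $\phi$, so $\phi^{s_n} \to \phi$ uniformly on $\mathbb{R}$; this gives $\{s_n\} \in \mathfrak{N}^u_{\phi} \subseteq \mathfrak{M}^u_{\phi} \subseteq \mathfrak{M}^u_{\varphi}$. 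Consequently $\varphi^{s_n}$ converges uniformly to some $\chi \in C(\mathbb{R}, \mathbb{X})$; the even subsequence is identically $\varphi$, which forces $\chi = \varphi$, and reading off the odd subsequence then yields $\varphi^{t_k} \to \varphi$ uniformly, i.e., $\{t_n\} \in \mathfrak{N}^u_{\varphi}$.

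For (2), I would invoke the classical Bochner criterion: a continuous function $\phi$ is Bohr almost periodic if and only if every sequence $\{t_n\} \subset \mathbb{R}$ admits a subsequence $\{t_{n_k}\}$ such that $\phi^{t_{n_k}}$ converges uniformly on $\mathbb{R}$, equivalently $\{t_{n_k}\} \in \mathfrak{M}^u_{\phi}$. Given any $\{t_n\}$, Bohr almost periodicity of $\phi$ produces such a subsequence, which belongs to $\mathfrak{M}^u_{\phi} \subseteq \mathfrak{M}^u_{\varphi}$ by hypothesis; hence $\varphi^{t_{n_k}}$ converges uniformly on $\mathbb{R}$. Since $\{t_n\}$ was arbitrary, Bochner's criterion applied to $\varphi$ yields that $\varphi$ is Bohr almost periodic.

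I do not expect significant technical obstacles; both arguments are essentially formal once the correct reduction is spotted. The only bookkeeping point deserving attention is the claim that uniform convergence of an interleaved sequence reduces to uniform convergence of each subsequence to a common limit, which is immediate from taking the maximum of the two sup-deviations. A secondary subtlety is that the "convergence" invoked in Bochner's criterion must be taken as uniform convergence on all of $\mathbb{R}$ (matching the definition of $\mathfrak{M}^u$), rather than the compact-open convergence used for $\mathfrak{M}$; with this alignment in place, no further technical issue arises.
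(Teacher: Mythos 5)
The paper does not prove this lemma; it is imported verbatim from \cite{38} without argument, so there is no in-paper proof to compare against. Your proof is correct and is essentially the standard argument behind the cited result: the interleaving of $\{t_k\}$ with the zero shifts is exactly the Shcherbakov-style device for upgrading the $\mathfrak{M}^u$-inclusion to the $\mathfrak{N}^u$-inclusion, and item~2 is the routine application of Bochner's criterion (valid here since $\mathbb{X}$ and $\mathbb{Y}$ are complete, and uniform convergence of $\varphi^{t+t_{n_k}}$ in the Bebutov metric, uniformly in $t$, is equivalent to $\sup_{t\in\mathbb{R}}\rho(\varphi(t+t_{n_k}),\psi(t))\to 0$, which is the convergence Bochner's criterion requires). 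No gaps.
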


	\begin{lemma}
		(\cite{38}) Let $F \in BUC(\mathbb{R} \times \mathbb{X}, \mathbb{X})$ and $\hat{F}(t) := F(t, \cdot)$ be a mapping from $\mathbb{R}$ to $BC(\mathbb{X}, \mathbb{X})$. We have
		\begin{itemize}
			\item[1.] $\mathfrak{M}_{F} = \mathfrak{M}_{\hat{F}}$ for any $F \in BUC(\mathbb{R} \times \mathbb{X}, \mathbb{X})$;
			\item[2.] $\mathfrak{M}^u_{F} = \mathfrak{M}^u_{\hat{F}}$ for any $F \in BUC(\mathbb{R} \times \mathbb{X}, \mathbb{X})$.
		\end{itemize}
		Here $\mathfrak{M}_{F}$ is the set of all sequences $\left\{ t_n \right\}$ such that $F^{t_n}$ converges in the space $BUC(\mathbb{R} \times \mathbb{X}, \mathbb{X})$ and $\mathfrak{M}^u_{F}$ is the set of all sequences $\left\{ t_n \right\}$ such that $F^{t + t_n}$ converges in the space $BUC(\mathbb{R} \times \mathbb{X}, \mathbb{X})$ uniformly with respect to $t \in \mathbb{R}$.
	\end{lemma}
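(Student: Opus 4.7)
The plan is to reduce both equalities to the same underlying Cauchy condition by unfolding the two metrics involved, observing that both metrics on $BUC(\mathbb{R} \times \mathbb{X}, \mathbb{X})$ and on $C(\mathbb{R}, BC(\mathbb{X}, \mathbb{X}))$ are built from the same nested family $\{D_n\}$ of bounded subsets and describe the same ``uniform convergence on bounded space-time rectangles''. The argument is essentially a careful unwinding of the series form of $d_{BUC}$ and $d_{BC}$ together with Claim~\ref{remark 2.2}.

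For part~1, I would use completeness of $BUC(\mathbb{R} \times \mathbb{X}, \mathbb{X})$ and the series form of $d_{BUC}$ to rewrite $\{t_n\} \in \mathfrak{M}_F$ as: there exists $G \in BUC(\mathbb{R} \times \mathbb{X}, \mathbb{X})$ with
\[
\sup_{|t| \leq N,\ x \in D_N} \rho(F(t+t_n, x), G(t, x)) \longrightarrow 0 \text{ for every } N \in \mathbb{N}.
\]
By Claim~\ref{remark 2.2} applied to $\hat{F} \in C(\mathbb{R}, BC(\mathbb{X}, \mathbb{X}))$ and the series form of $d_{BC}$, the condition $\{t_n\} \in \mathfrak{M}_{\hat{F}}$ reduces to: there exists a continuous $\hat{G} : \mathbb{R} \to BC(\mathbb{X}, \mathbb{X})$ with
\[
\max_{|t| \leq T}\ \sup_{x \in D_N} \rho(F(t+t_n, x), \hat{G}(t)(x)) \longrightarrow 0 \text{ for every } T > 0,\ N \in \mathbb{N}.
\]
Since the $D_n$ and the intervals $[-n, n]$ are nested, either statement implies the other by replacing $(N, T)$ by $\max(N, T)$ in both slots, and the limits correspond via $G(t, x) = \hat{G}(t)(x)$.

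Part~2 proceeds in the same spirit but with the extra quantifier ``uniformly in $t \in \mathbb{R}$''. The condition $\{t_n\} \in \mathfrak{M}^u_F$, interpreted as $\sup_{t \in \mathbb{R}} d_{BUC}(F^{t+t_n}, F^{t+t_m}) \to 0$, unwinds via the series form to: for every $N$,
\[
\sup_{t \in \mathbb{R}}\ \sup_{|s| \leq N,\ x \in D_N} \rho(F(s+t+t_n, x), F(s+t+t_m, x)) \longrightarrow 0 \text{ as } n, m \to \infty.
\]
The key step is the change of variable $u = s + t$: as $(s, t)$ ranges over $[-N, N] \times \mathbb{R}$, the variable $u$ ranges over all of $\mathbb{R}$, so the double supremum collapses to
\[
\sup_{u \in \mathbb{R},\ x \in D_N} \rho(F(u+t_n, x), F(u+t_m, x)) \longrightarrow 0,
\]
which is exactly the same unwinding one obtains from $\{t_n\} \in \mathfrak{M}^u_{\hat{F}}$ via Claim~\ref{remark 2.2} applied to the $C(\mathbb{R}, BC)$ metric under uniform-in-$t$ Cauchy control.

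The main obstacle is purely bookkeeping: correctly interpreting ``uniform in $t$'' in the $BUC$ metric (as uniformity, in $t$, of the whole $d_{BUC}$-distance, which via the series form is equivalent to uniformity of each $d_N$) and reconciling the decoupled time/space cutoffs $(T, N)$ of $C(\mathbb{R}, BC)$ with the coupled cutoff $N$ of $d_{BUC}$. Once this is settled, both equalities follow from the nested-sets trick and the change of variable $u = s + t$, and the limit functions on the two sides are identified via $G(t, x) = \hat{G}(t)(x)$.
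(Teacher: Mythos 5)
The paper gives no proof of this lemma at all; it is imported verbatim from \cite{38}, so there is nothing internal to compare your argument against. On its own terms your metric-unwinding argument is correct and is the natural one: convergence in $(BUC(\mathbb{R}\times\mathbb{X},\mathbb{X}),d_{BUC})$ is uniform convergence on bounded space-time rectangles, convergence in $(C(\mathbb{R},BC(\mathbb{X},\mathbb{X})),d)$ is, after unwinding $d_{BC}$, uniform convergence on sets $[-T,T]\times D_N$, and the two families of cutoffs are cofinal in each other since the $D_n$ are nested; the change of variable $u=s+t$ correctly collapses the double supremum in the uniform case, and phrasing $\mathfrak{M}^u$ via the uniform Cauchy condition is the right way to handle the $t$-dependent limits (both spaces are complete). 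The only step you gloss over is the transfer of the limit object: to conclude $\mathfrak{M}_{\hat F}\subseteq\mathfrak{M}_F$ you must check that the limit $\hat G\in C(\mathbb{R},BC(\mathbb{X},\mathbb{X}))$ induces a function $G(t,x):=\hat G(t)(x)$ that actually lies in $BUC(\mathbb{R}\times\mathbb{X},\mathbb{X})$, i.e.\ is continuous in $t$ uniformly with respect to $x$ on bounded sets and bounded on bounded sets; both follow routinely from continuity of $\hat G$ into $(BC(\mathbb{X},\mathbb{X}),d_{BC})$ and from $\hat G(t)$ being bounded, but it deserves a sentence. With that addition the proof is complete.
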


	\subsection{Other important definitions}
	Let $\mathbb{H}$ be a real separable Hilbert space with the norm $\vert \cdot \vert$; $(\Omega,\mathcal{F},{\lbrace{\mathcal{F}_t}\rbrace}_{t\geq0},\mathbb{P})$ be a complete filtered probability space; $L^2(\mathbb{P}, \mathbb{H})$ be the space of $\mathbb{H}$-valued random variables $x$ such that
	\begin{displaymath}
		\begin{aligned}
			\mathbb{E}|x|^2:= \int_{\Omega} {|x|^2} \,{\rm d}\mathbb{P} < \infty.
		\end{aligned} \notag
	\end{displaymath}
	Then $L^2(\mathbb{P}, \mathbb{H})$ is a Hilbert space equipped with the norm
	\begin{displaymath}
		\begin{aligned}
			\Vert x\Vert_2:= (\int_{\Omega} {|x|^2} \,{\rm d}\mathbb{P})^{\frac{1}{2}}.
		\end{aligned} \notag
	\end{displaymath}
	Let $C_b(\mathbb{R}, L^2(\mathbb{P}, \mathbb{H}))$ represent the Hilbert space of all continuous and bounded mappings $\varphi : \mathbb{R} \to L^2(\mathbb{P}, \mathbb{H})$ equipped with the norm $||\varphi ||_{\infty} \!:= \sup\left\{ |\varphi(t)| : t \in \mathbb{R} \right\}$.
	\begin{claim}\label{remark 2.26}
		If $\varphi \in C_b(\mathbb{R}, L^2(\mathbb{P}, \mathbb{H}))$, then for any $\psi \in \bar{H}(\varphi)$ we have $||\psi(t)||_2 \leq ||\varphi||_{\infty}$ for all $t \in \mathbb{R}$.
	\end{claim}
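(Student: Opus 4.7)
The plan is to transfer the uniform bound $\|\varphi\|_\infty$ from $\varphi$ to every element of its hull, using continuity of the $L^2$-norm under the compact-open convergence that defines $\bar H(\varphi)$.

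First, I would take an arbitrary $\psi \in \bar H(\varphi)$. By the definition of the hull, there is a sequence $\{h_n\} \subset \mathbb{R}$ such that $\varphi^{h_n} \to \psi$ in $C(\mathbb{R}, L^2(\mathbb{P},\mathbb{H}))$ with respect to the metric $d$ introduced earlier. By Claim \ref{remark 2.2}, this $d$-convergence is equivalent to uniform convergence on every compact subset of $\mathbb{R}$; in particular, for every fixed $t \in \mathbb{R}$, choosing $T > |t|$ yields
\[
\|\varphi(t + h_n) - \psi(t)\|_2 = \|\varphi^{h_n}(t) - \psi(t)\|_2 \longrightarrow 0 \quad \text{as } n \to \infty.
\]

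Second, since $\varphi \in C_b(\mathbb{R}, L^2(\mathbb{P},\mathbb{H}))$, the uniform bound $\|\varphi(t+h_n)\|_2 \leq \|\varphi\|_\infty$ holds for every $n$. The continuity of the $L^2$-norm (via the reverse triangle inequality applied to the convergence above) then gives
\[
\|\psi(t)\|_2 = \lim_{n\to\infty} \|\varphi(t+h_n)\|_2 \leq \|\varphi\|_\infty.
\]
Since $t \in \mathbb{R}$ was arbitrary, the asserted inequality follows.

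I expect no serious obstacle here; the only step that deserves care is matching the mode of convergence used to define $\bar H(\varphi)$ (convergence in the compact-open metric $d$) with pointwise convergence of the representatives in the target Hilbert space $L^2(\mathbb{P},\mathbb{H})$, and this is precisely the content of Claim \ref{remark 2.2}. The remainder is a standard continuity-of-norm argument.
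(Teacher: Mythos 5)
Your proof is correct: the paper states this claim without proof, and your argument (extract a sequence of translates converging to $\psi$, use the equivalence of $d$-convergence with uniform convergence on compacts from Claim \ref{remark 2.2} to get pointwise $L^2$-convergence, then pass the uniform bound through the continuity of the norm) is exactly the standard argument the authors are implicitly relying on. No gaps.
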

	\begin{definition}
		We say that a semigroup of operators $\left\{ U(t) \right\}_{t \geq 0}$ is exponentially stable, if there are positive finite numbers $N, \alpha > 0$, such that $||U(t)|| \leq N e^{-\alpha t}$ for all $t \geq 0$.
	\end{definition}
	
	\begin{definition}
		We say that an $\mathcal{F}_t$-adapted processes $\left\{x(t)\right\}_{t \in \mathbb{R}}$ is a mild solution of equation $\eqref{1.1}$, if it satisfies the stochastic integral equation
		\begin{displaymath}
			\begin{aligned}
				x(t) = U(t - t_0)x(t_0) + \int_{t_0}^{t} {U(t - s)f(s, x(s)) }  \,{\rm d}s + \int_{t_0}^{t} {U(t - s)g(s)}  \,{\rm d}B^H_{Q}(s)
			\end{aligned} \notag
		\end{displaymath}
		for all $t \geq t_0$ and each $t_0 \in \mathbb{R}$.
	\end{definition}
	We adopt the framework of backward stochastic differential equations \cite{64} and derive the solution of the equation for $ t \leq t_0 $ based on a terminal condition $ x(t_0) = x(t_0) $. When the semigroup $ \left\{U(t)\right\}_{t \geq 0} $ generated by the operator $ A $ is exponentially stable, we can make reasonable assumption that $U(t - t_0)x(t_0) = 0$, as $ t_0 \to -\infty $. That is, the mild solution of equation $\eqref{1.1}$ can be written in the following form:
	\begin{displaymath}
		\begin{aligned}
			x(t) = \int_{-\infty}^{t} {U(t - s)f(s, x(s)) }  \,{\rm d}s + \int_{-\infty}^{t} {U(t - s)g(s) }  \,{\rm d}B^H_{Q}(s),
		\end{aligned} \notag
	\end{displaymath}
	and
	\begin{displaymath}
		\begin{aligned}
			x(t + T) &= \int_{-\infty}^{t + T} {U(t + T - s)f(s, x(s)) }  \,{\rm d}s + \int_{-\infty}^{t + T} {U(t + T - s)g(s)}  \,{\rm d}B^H_{Q}(s)\\
			&= \int_{-\infty}^{t} {U(t - 
			\tau)f(\tau + T, x(\tau + T)) }  \,{\rm d}\tau + \int_{-\infty}^{t} {U(t - \tau)g(\tau + T) }  \,{\rm d}\left(B^H_{Q}(\tau + T) - B^H_{Q}(T)\right)\\
			&= \int_{-\infty}^{t} {U(t - 
			\tau)f(\tau + T, x(\tau + T)) }  \,{\rm d}\tau + \int_{-\infty}^{t} {U(t - \tau)g(\tau + T) }  \,{\rm d}\tilde{B}^H_{Q}(\tau),
		\end{aligned} \notag
	\end{displaymath}
	where $\tilde{B}^H_{Q}(\tau) := B^H_{Q}(\tau + T) - B^H_{Q}(T)$. Due to the incremental stationarity of fBm, $\tilde{B}^H_{Q}(\tau)$ and $B^H_{Q}(s)$ have the same distribution. Therefore, when $f(s, x(s))$ and $g(s, x(s))$ have some kind of recurrence, we can study the recurrence of the solution to equation $\eqref{1.1}$ in distribution.

	Let $\mathcal{P}(\mathbb{H})$ be the space of all Borel probability measures on $\mathbb{H}$ endowed with the following metric:
	\begin{displaymath}
		\begin{aligned}
			\mathcal{D}(\mu, \nu) := \sup \bigg\{ \bigg\| {\int {f} \,{\rm d}\mu - \int {f} \,{\rm d}\nu } \bigg\| :||f||_{BL} \leq 1 \bigg\}, \quad \mu, \nu \in \mathcal{P}(\mathbb{H}),
		\end{aligned} \notag
	\end{displaymath}
	where $f \in \mathbb{H}$ are bounded Lipschitz continuous real-valued functions with the following norms:
	\begin{displaymath}
		\begin{aligned}
			||f||_{BL} = Lip(f) + ||f||_{\infty}, \quad Lip(f) = \sup\limits_{x \neq y} \frac{|f(x) - f(y)|}{|x - y|}, \quad ||f||_{\infty} = \sup\limits_{x \in \mathbb{H}} |f(x)|.
		\end{aligned} \notag
	\end{displaymath}
	We say that a sequence $\left\{ \mu_n \right\} \subset \mathbb{P}(\mathbb{H})$ weakly converge to $\mu$, if $\int {f} \,{\rm d}\mu_n \to \int {f} \,{\rm d}\mu$ for all $f \in C_b(\mathbb{H})$, where $C_b(\mathbb{H})$ denotes the space of all bounded continuous real-valued functions on $\mathbb{H}$. We can show that $(P(\mathbb{H}), \mathcal{D})$ is a separable complete metric space and that a sequence $\left\{ \mu_n \right\}$ weakly converges to $\mu$ if and only if $\mathcal{D}(\mu_n, \mu) \to 0$ as $n \to \infty$.
	\begin{definition}
		We say that a sequence of random variables $\left\{ x_n \right\}$ converge in distribution to the random variable $x$, if the corresponding laws $\left\{ \mu_n \right\}$ of $\left\{ x_n \right\}$ weakly converge to the law $\mu$ of $x$, i.e. $\mathcal{D}(\mu_n, \mu) \to 0$ as $n \to \infty$.
	\end{definition}
	\begin{definition}
		Let $\left\{ \varphi(t) \right\}_{t \in \mathbb{R}}$ be a mild solution of equation $\eqref{1.1}$. We say that $\varphi$ is compatible ${\rm (}$respectively, uniformly compatible${\rm )}$ in distribution if $\mathfrak{N}_{(f,g)} \subseteq \tilde{\mathfrak{N}}_{\varphi}$ ${\rm (}$respectively, $\mathfrak{M}_{(f,g)} \subseteq \tilde{\mathfrak{M}}_{\varphi}$${\rm )}$. Here, $\tilde{\mathfrak{N}}{\varphi}$ ${\rm (}$respectively, $\tilde{\mathfrak{M}}_{\varphi}$${\rm )}$ denotes the set of sequences $\left\{ t_n \right\} \subset \mathbb{R}$ such that the sequence $\left\{ \varphi(\cdot + t_n) \right\}$ converges to $\varphi(\cdot)$ ${\rm (}$respectively, $\left\{ \varphi(\cdot + t_n) \right\}$ converges ${\rm )}$ in distribution uniformly on any compact interval.
	\end{definition}

	\section{Semi-linear evolution equations driven by fBm}
	In this section, we consider the following stochastic semi-linear evolution equation:
	\begin{equation}\label{4.1}
		\begin{aligned}
			\mathrm{d} X(t)= \big( AX(t) + f(t,X(t)) \big) \mathrm{d}t + g(t)\mathrm{d}B^H_{Q}(t), 
		\end{aligned}
	\end{equation}
	where $A$ and $B^H_{Q}$ are the same as in equation $\eqref{1.1}$, and $f \in C(\mathbb{R} \times \mathbb{H}, L^2(\mathbb{P}, \mathbb{H})), g \in C(\mathbb{R}, L^0_Q(\mathbb{U}, \mathbb{H}))$ are $\mathcal{F}_t$-adapted.

	\begin{definition}
		We define some conditions regarding $f$ and $g$:
		\begin{itemize}
			\item[(C1)] There exists positive numbers $C_f, L \geq 0$ such that $|f(t, 0)| \leq C_f$ for any $t \in \mathbb{R}$ and $Lip(f) \leq L$, where
			\begin{displaymath}
				\begin{aligned}
					\text{Lip}(f) := \sup \left\{\frac{|f(t, x_1) - f(t, x_2)|}{|x_1 - x_2|} : x_1 \neq x_2, t \in \mathbb{R} \right\};
				\end{aligned} \notag
			\end{displaymath}
			\item[(C2)] There exists a positive number $C_{g}$ such that
			\begin{displaymath}
				\begin{aligned}
					\Vert g(t)\Vert_{L^0_Q(\mathbb{U}, \mathbb{H})} \leq C_{g}, \text{ uniformly in } \mathbb{R};
				\end{aligned}\notag
			\end{displaymath}
			\item[(C3)] $f$ is continuous in $t$ uniformly with respect to $x$ on each bounded subset $Q \subset \mathbb{H}$.
		\end{itemize}
	\end{definition}
	\begin{claim}~
		\begin{itemize}
			\item[1.] If $f$ and $g$ satisfy $(C1)$-$(C2)$ with the constants $C_f, C_g$ and $L$, then every pair of functions $(\tilde{f}, \tilde{g})$ in $H(f, g) := \overline{\left\{ (f^{\tau} ,g^{\tau}) : \tau \in \mathbb{R} \right\}} $, the hull of $(f, g)$, also posses the same property with the same constants;
			\item[2.]  If $f$ and $g$ satisfy the conditions $(C1)$-$(C3)$, then $f \in BUC(\mathbb{R} \times \mathbb{H}, L^2(\mathbb{P}, \mathbb{H})), g \in BUC(\mathbb{R}, L^0_Q(\mathbb{U}, \mathbb{H})$ and $H(f, g) \subset BUC(\mathbb{R} \times \mathbb{H}, L^2(\mathbb{P}, \mathbb{H})) \times BUC(\mathbb{R}, L^0_Q(\mathbb{U}, \mathbb{H}))$.
		\end{itemize}
	\end{claim}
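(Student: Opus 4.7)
The plan is to treat the two items separately, using only that membership in $\bar H(F,G)$ means the existence of a sequence $\tau_n\in\mathbb{R}$ with $F^{\tau_n}\to\tilde F$ and $G^{\tau_n}\to\tilde G$ in the metric $d_{BUC}$ defined in \eqref{2.5}, which in turn means uniform convergence on bounded subsets of $\mathbb{R}\times\mathbb{H}$.

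For item 1, I would simply transfer the bounds along the sequence. For any $t\in\mathbb{R}$, write $|\tilde F(t,0)|\le |\tilde F(t,0)-F^{\tau_n}(t,0)|+|F(t+\tau_n,0)|$; the first term tends to zero (uniform convergence on the bounded set $\{(t,0)\}$ works for each fixed $t$) and the second is bounded by $M_0$ by (C1), so $|\tilde F(t,0)|\le M_0$. The same estimate applies to $\tilde G$. For the Lipschitz constant, fix $x_1\neq x_2$ and $t$; use $|F^{\tau_n}(t,x_1)-F^{\tau_n}(t,x_2)|\le L|x_1-x_2|$ for every $n$ and pass to the pointwise limit on both sides. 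The analogous computation works for $\tilde G$, establishing (C1)--(C2) with the same constants $M_0$ and $L$.

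For item 2, I would verify the two defining properties of $BUC$. Continuity in $t$ uniformly with respect to $x$ on bounded subsets is exactly (C3). Boundedness on bounded subsets follows from (C1)--(C2) alone: for $(t,x)$ with $|x|\le R$,
\[
|F(t,x)|\le |F(t,0)|+|F(t,x)-F(t,0)|\le M_0+LR,
\]
and the same bound works for $G$ (with the norm of $L^0_Q(\mathbb{U},\mathbb{H})$). Thus $F\in BUC(\mathbb{R}\times\mathbb{H},L^2(\mathbb{P},\mathbb{H}))$ and $G\in BUC(\mathbb{R}\times\mathbb{H},L^0_Q(\mathbb{U},\mathbb{H}))$.

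The only mildly delicate step is showing that elements of $\bar H(F,G)$ inherit property (C3), since (C1)--(C2) transfer automatically by item 1 and the boundedness argument just given then applies to $(\tilde F,\tilde G)$ as well. I would use a standard $\epsilon/3$ argument: given a bounded $D\subset\mathbb{H}$, $t_0\in\mathbb{R}$, and $\epsilon>0$, choose $n$ large enough so that $\sup_{|s|\le|t_0|+1,\,x\in D}|F^{\tau_n}(s,x)-\tilde F(s,x)|<\epsilon/3$ (possible by $d_{BUC}$-convergence), then use (C3) for $F$ on the bounded set $\{s+\tau_n : |s-t_0|<1\}\times D$ to select $\delta\in(0,1)$ making $\sup_{x\in D}|F^{\tau_n}(t,x)-F^{\tau_n}(t_0,x)|<\epsilon/3$ whenever $|t-t_0|<\delta$, and conclude by the triangle inequality; the same argument handles $\tilde G$. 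This gives $\bar H(F,G)\subset BUC(\mathbb{R}\times\mathbb{H},L^2(\mathbb{P},\mathbb{H}))\times BUC(\mathbb{R}\times\mathbb{H},L^0_Q(\mathbb{U},\mathbb{H}))$, which completes the claim. No step looks like a real obstacle; the work is entirely in bookkeeping the metric $d_{BUC}$ and exploiting the uniformity built into (C1)--(C3).
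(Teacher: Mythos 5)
Your argument is correct; note that the paper states this Claim without any proof, so there is no argument of record to compare against. The two transfers you perform --- passing (C1)--(C2) to pointwise limits along the approximating translates, and the $\epsilon/3$ argument (with $n$ fixed before $\delta$ is chosen) showing that (C3) survives $d_{BUC}$-limits --- are exactly the standard verifications, and together with the bound $|F(t,x)|\le M_0+L|x|$ on bounded sets they establish both items.
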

	\begin{theorem}\label{theorem 4.3}
		Assume that the semigroup $\{U(t)\}_{t \geq 0}$ generated by the operator $A$ is exponentially stable, and that the functions $f \in C(\mathbb{R} \times \mathbb{H}, L^2(\mathbb{P}, \mathbb{H}))$ and $g \in C(\mathbb{R}, L^0_Q(\mathbb{U}, \mathbb{H}))$ satisfy conditions $(C1)$ and $(C2)$. If
		\begin{displaymath}
			\begin{aligned}
				\theta_1 :=  \frac{N^2 L^2}{{\alpha}^2} <1,
			\end{aligned} \notag
		\end{displaymath}
		then equation $\eqref{4.1}$ admits a unique bounded solution in $C_b(\mathbb{R}, L^2(\mathbb{P}, \mathbb{H}))$.
	\end{theorem}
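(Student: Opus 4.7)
The plan is to apply Banach's fixed-point theorem on the complete metric space $C_b(\mathbb{R}, L^2(\mathbb{P}, \mathbb{H}))$. Given any $\varphi \in C_b(\mathbb{R}, L^2(\mathbb{P}, \mathbb{H}))$, conditions $(C1)$ and $(C2)$ give $|F(t, \varphi(t))| \leq M_0 + L|\varphi(t)|$ and $|G(t, \varphi(t))|_{L^0_Q(\mathbb{U}, \mathbb{H})} \leq M_0 + L|\varphi(t)|$, so $f_\varphi(t) := F(t, \varphi(t))$ and $g_\varphi(t) := G(t, \varphi(t))$ lie in $C_b(\mathbb{R}, L^2(\mathbb{P}, \mathbb{H}))$ and $C_b(\mathbb{R}, L^0_Q(\mathbb{U}, \mathbb{H}))$ respectively. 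I define the operator $\mathcal{T}$ by
\begin{displaymath}
(\mathcal{T}\varphi)(t) := \int_{-\infty}^{t} U(t - \tau)F(\tau, \varphi(\tau))\,{\rm d}\tau + \int_{-\infty}^{t} U(t - \tau)G(\tau, \varphi(\tau))\,{\rm d}B^H_{Q}(\tau),
\end{displaymath}
so that a fixed point of $\mathcal{T}$ is precisely a bounded mild solution of \eqref{4.1}.

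First I would check that $\mathcal{T}$ maps $C_b(\mathbb{R}, L^2(\mathbb{P}, \mathbb{H}))$ into itself. Boundedness of $t \mapsto \mathcal{T}\varphi(t)$ in $L^2(\mathbb{P}, \mathbb{H})$ follows from the linear a priori estimate \eqref{3.5} in Theorem \ref{theorem 3.2} applied to $(f_\varphi, g_\varphi)$. For $L^2$-continuity in $t$, the drift piece is handled by standard strongly continuous semigroup arguments, while the stochastic piece reduces (after splitting $\int_{-\infty}^{t+h} - \int_{-\infty}^{t}$) to controlling integrals of the form $\mathbb{E}|\int_{s}^{t} e^{-\alpha(t-\tau)}\,{\rm d}B^H_Q(\tau)|^2$, which vanish as $s \to t$ by \eqref{4.21} in Lemma \ref{lemma 4.1}.

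Next I would show $\mathcal{T}$ is a contraction with constant $\sqrt{\theta_1} < 1$. For $\varphi_1, \varphi_2 \in C_b(\mathbb{R}, L^2(\mathbb{P}, \mathbb{H}))$, write $\mathcal{T}\varphi_1(t) - \mathcal{T}\varphi_2(t) = J_1(t) + J_2(t)$ where $J_1$ and $J_2$ are the drift and stochastic integral differences respectively. For $J_1$, the Cauchy-Schwarz inequality together with exponential stability and $(C2)$ yields as in \eqref{3.9}
\begin{displaymath}
\mathbb{E}|J_1(t)|^2 \leq \frac{N^2 L^2}{\alpha^2} \sup_{\tau \in \mathbb{R}} \mathbb{E}|\varphi_1(\tau) - \varphi_2(\tau)|^2.
\end{displaymath}
For $J_2$, the key step is to apply the fBm second-moment identity together with the pointwise Lipschitz estimate $|G(\tau, \varphi_1(\tau)) - G(\tau, \varphi_2(\tau))|_{L^0_Q(\mathbb{U}, \mathbb{H})} \leq L|\varphi_1(\tau) - \varphi_2(\tau)|$ and the bound \eqref{4.3} in Lemma \ref{lemma 4.1}, producing
\begin{displaymath}
\mathbb{E}|J_2(t)|^2 \leq N^2 L^2 \hat{C}\, \sup_{\tau \in \mathbb{R}} \mathbb{E}|\varphi_1(\tau) - \varphi_2(\tau)|^2.
\end{displaymath}
Combining via $(a+b)^2 \leq 2(a^2 + b^2)$ and taking $\sup_t$ gives $\sup_t \mathbb{E}|\mathcal{T}\varphi_1(t) - \mathcal{T}\varphi_2(t)|^2 \leq \theta_1 \sup_\tau \mathbb{E}|\varphi_1(\tau) - \varphi_2(\tau)|^2$, so Banach's fixed-point theorem delivers the unique bounded mild solution.

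The main obstacle is the bound on $J_2$. Unlike in the linear case of Theorem \ref{theorem 3.2}, the integrand $U(t-\tau)[G(\tau, \varphi_1(\tau)) - G(\tau, \varphi_2(\tau))]$ is genuinely random, so the Lipschitz bound $L|\varphi_1(\tau) - \varphi_2(\tau)|$ cannot simply be pulled out of the double $s$-$\tau$ integral coming from the fBm isometry. The argument has to interchange expectation with the Hölder-type estimates used in Lemma \ref{lemma 4.1}, then pass the pointwise-in-$\tau$ supremum $\sup_\tau \mathbb{E}|\varphi_1(\tau) - \varphi_2(\tau)|^2$ outside the resulting integral before reducing it to the scalar quantity $\hat{C}$ defined in Lemma \ref{lemma 4.1}. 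Once this reduction is cleanly carried out, the rest of the proof is routine.
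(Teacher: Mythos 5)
Your proposal follows essentially the same route as the paper: the same integral operator $\mathcal{T}$ on $C_b(\mathbb{R}, L^2(\mathbb{P}, \mathbb{H}))$, the same drift/stochastic decomposition with the Cauchy--Schwarz bound $N^2L^2/\alpha^2$ for the drift part and the $\hat{C}$ bound from Lemma \ref{lemma 4.1} for the stochastic part, combined into the contraction constant $\theta_1$ and closed by Banach's fixed-point theorem. You are in fact more explicit than the paper about the delicate step of extracting the random Lipschitz factor $L|\varphi_1(\tau)-\varphi_2(\tau)|$ from the fBm integral before reducing to $\hat{C}$ — the paper simply writes this factorization without further justification.
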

	\begin{proof}
		As the proof is well-known, we provide only a brief version. Since the semigroup $\left\{ U(t) \right\}_{t \leq 0}$ generated by operator $A$ is exponentially stable, we can verify that $x \in C_b(\mathbb{R} ,L^2(\mathbb{P}, \mathbb{H}))$ is a mild solution of equation $\eqref{4.1}$ if and only if it satisfies the following integral equation:
		\begin{displaymath}
			\begin{aligned}
				x(t) = \int_{-\infty}^{t} {U(t - \tau)f(\tau, x(\tau))} \,{\rm d}\tau + \int_{-\infty}^{t} {U(t - \tau)g(\tau)} \,{\rm d}B^H_{Q}(\tau).
			\end{aligned} \notag
		\end{displaymath}
		We define operator $\mathcal{T}$ on $C_b(\mathbb{R}, L^2(\mathbb{P}, \mathbb{H}))$ as follows:
		\begin{displaymath}
			\begin{aligned}
				(\mathcal{T}x)(t) = \int_{-\infty}^{t} {U(t - \tau)f(\tau, x(\tau))} \,{\rm d}\tau + \int_{-\infty}^{t} {U(t - \tau)g(\tau)} \,{\rm d}B^H_{Q}(\tau).
			\end{aligned} \notag
		\end{displaymath}
		Since $f, g$ satisfy conditions $(C1)$ and $(C2)$, we can show that operator $\mathcal{T}$ maps $C_b(\mathbb{R},L^2(\mathbb{P}, \mathbb{H}))$ into itself. Note that
		\begin{align*}
			\mathbb{E}|(\mathcal{T}x_1)(t) - (\mathcal{T}x_2)(t)|^2
			 &\leq  \mathbb{E} \bigg| \int_{-\infty}^{t} {U(t - \tau) \big( f(\tau, x_1(\tau)) -f(\tau, x_2(\tau)) \big)} \,{\rm d}\tau \bigg|^2 
			\\ &\leq  \mathbb{E} \left( \int_{-\infty}^{t} {Ne^{-{\alpha}(t - \tau)} L | x_1(\tau)  - x_2(\tau)| } \,{\rm d}\tau \right)^2 
			\\ &\leq N^2 L^2  \int_{-\infty}^{t} {e^{-{\alpha}(t - \tau)} } \,{\rm d}\tau \cdot \mathbb{E} \int_{-\infty}^{t} {e^{-{\alpha}(t - \tau)} | x_1(\tau)  - x_2(\tau)|^2 } \,{\rm d}\tau
			\\ &\leq \frac{N^2 L^2}{{\alpha}^2}  \sup\limits_{\tau \in \mathbb{R}} \mathbb{E}|x_1(\tau) - x_2(\tau)|^2.
		\end{align*} 
		Therefore,
		\begin{displaymath}
			\begin{aligned}
				\sup\limits_{t \in \mathbb{R}} \mathbb{E}|(\mathcal{T}x_1)(t) - (\mathcal{T}x_2)(t)|^2 \leq \theta_1 \sup\limits_{t \in \mathbb{R}} \mathbb{E}|x_1(\tau) - x_2(\tau)|^2.
			\end{aligned} \notag
		\end{displaymath}
		Since $ \theta_1 < 1 $, the operator $ \mathcal{T} $ is a contraction mapping on $ C_b(\mathbb{R}, L^2(\mathbb{P}, \mathbb{H})) $. By the contraction mapping theorem, there exists a unique $ \xi \in C_b(\mathbb{R}, L^2(\mathbb{P}, \mathbb{H})) $ such that $ \mathcal{T}\xi = \xi $, which is the unique $ L^2 $-bounded solution to equation $\eqref{4.1}$.
	\end{proof}
	\begin{theorem}\label{theorem 4.5}
		Assume that the semigroup $\{U(t)\}_{t \geq 0}$ generated by the operator $A$ is exponentially stable, and that the functions $f \in C(\mathbb{R} \times \mathbb{H}, L^2(\mathbb{P}, \mathbb{H}))$ and $g \in C(\mathbb{R}, L^0_Q(\mathbb{U}, \mathbb{H}))$ satisfy conditions $(C1)$ and $(C2)$.
		Then we can obtain the following statements:
		\begin{itemize}
			\item[1.] If $ L < \frac{{\alpha}}{2N} $, equation $\eqref{4.1}$ has a unique solution $ \xi \in C(\mathbb{R}, B[0, R]) $, where $R$ is the larger of the two solutions of the following quadratic equation:
			\begin{displaymath}
				\left( 1 - \frac{2N^2}{\alpha^2} L^2 \right) R^2 - \frac{2N^2}{\alpha^2} 2C_f L R = \frac{2N^2}{\alpha^2} \left( C_f^2 + C_H C_g \alpha^{2 - 2H} (2H - 1)^{2H - 1} \right),
			\end{displaymath}
			and
			\begin{displaymath}
				\begin{aligned}
					B[0, R] := \left\{  x \in L^2(\mathbb{P}, \mathbb{H}) : ||x||_2 \leq R \right\};
				\end{aligned} \notag
			\end{displaymath}
			\item[2.] If $L < \frac{{\alpha}}{2\sqrt{2}N}$ and additionally $f$ satisfies condition $(C3)$, then
			\begin{itemize}
				\item[(a)] $\mathfrak{M}^u_{(F, G)} \subseteq \tilde{\mathfrak{M}}^u_{\xi}$, recalling that $\tilde{\mathfrak{M}}^u_{\xi}$ means the set of all sequences $\left\{ t_n \right\}$ such that $\xi(t + t_n)$ converges in distribution uniformly in $t \in \mathbb{R}$;
				\item[(b)] the solution $\xi$ is uniformly compatible in distribution.
			\end{itemize}
		\end{itemize}
	\end{theorem}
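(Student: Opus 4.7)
The plan is to establish Part 1 by refining the contraction-mapping argument of Theorem \ref{theorem 4.3} so that the fixed point lands inside the bounded ball $C(\mathbb{R}, B[0,R])$, and to establish Part 2 by adapting the distributional-convergence scheme of Theorem \ref{theorem 3.2}(3)--(4) to the semi-linear setting. For Part 1, I would take the operator $\mathcal{T}$ defined in the proof of Theorem \ref{theorem 4.3} but view it as acting on the closed subset $C(\mathbb{R}, B[0,R]) \subset C_b(\mathbb{R}, L^2(\mathbb{P}, \mathbb{H}))$. Conditions $(C1)$--$(C2)$ imply the linear growth $|F(t,x)|, |G(t,x)| \leq M_0 + L|x|$, so running the same estimates as in Theorem \ref{theorem 3.2} together with Lemma \ref{lemma 4.1} yields
\[
\sup_{t \in \mathbb{R}} \mathbb{E}|(\mathcal{T}x)(t)|^2 \leq 2 N^2 (M_0 + LR)^2 \left(\frac{1}{\alpha^2} + \hat{C}\right)
\]
for every $x \in C(\mathbb{R}, B[0,R])$. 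The stated value of $R$ is precisely the one turning this inequality into $\leq R^2$, and the hypothesis $L < \alpha/(\sqrt{2}N\sqrt{1+\hat{C}\alpha^2})$ ensures the denominator in the formula for $R$ is positive. Thus $\mathcal{T}$ preserves $C(\mathbb{R}, B[0,R])$; its contraction constant is the same $\theta_1$ as in Theorem \ref{theorem 4.3} and lies in $(0,1)$ under the same hypothesis, so the Banach fixed point theorem delivers the unique $\xi$.

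For Part 2(a), take $\{t_n\} \in \mathfrak{M}^u_{(F,G)}$ with limit $(\tilde{F}, \tilde{G}) \in \bar{H}(F,G)$, where $F^{t_n} \to \tilde{F}$ and $G^{t_n} \to \tilde{G}$ uniformly on $\mathbb{R} \times K$ for every bounded $K \subset \mathbb{H}$. Since the hull inherits $(C1)$--$(C3)$ with the same constants, Part 1 applied to $(\tilde{F}, \tilde{G})$ produces a unique bounded solution $\tilde{\xi} \in C(\mathbb{R}, B[0,R])$. Let $\tilde{\xi}_n$ denote the bounded solution of the equation with coefficients $(F^{t_n}, G^{t_n})$. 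By the stationarity of increments of $B^H_Q$ (as noted in the displayed computation at the end of Section 2), the processes $t \mapsto \xi(t+t_n)$ and $t \mapsto \tilde{\xi}_n(t)$ share the same law on $\mathbb{R}$, so it suffices to prove $\sup_{t \in \mathbb{R}} \mathbb{E}|\tilde{\xi}_n(t) - \tilde{\xi}(t)|^2 \to 0$; $L^2$-convergence then gives convergence in distribution uniformly in $t$. To obtain this estimate, subtract the mild-solution formulas and split
\[
F^{t_n}(s,\tilde{\xi}_n(s)) - \tilde{F}(s,\tilde{\xi}(s)) = \bigl[F^{t_n}(s,\tilde{\xi}_n(s)) - F^{t_n}(s,\tilde{\xi}(s))\bigr] + \bigl[F^{t_n}(s,\tilde{\xi}(s)) - \tilde{F}(s,\tilde{\xi}(s))\bigr],
\]
and similarly for $G$. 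The first summand is bounded by $L|\tilde{\xi}_n - \tilde{\xi}|$; the second is uniformly small because $\tilde{\xi}$ stays inside $B[0,R]$ and $F^{t_n}, G^{t_n}$ converge uniformly on $\mathbb{R} \times B[0,R]$. Feeding these into the drift and fBm-integral estimates of Theorem \ref{theorem 4.3} and Lemma \ref{lemma 4.1} yields an inequality of the form $\sup_t \mathbb{E}|\tilde{\xi}_n - \tilde{\xi}|^2 \leq 2\theta_1 \sup_t \mathbb{E}|\tilde{\xi}_n - \tilde{\xi}|^2 + \epsilon_n$ with $\epsilon_n \to 0$. The strengthened hypothesis $L < \alpha/(2\sqrt{2}N\sqrt{1+\hat{C}\alpha^2})$ forces $2\theta_1 < 1$, which allows the first term to be absorbed.

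For Part 2(b), apply the same auxiliary-solution device to $\{t_n\} \in \mathfrak{M}_{(F,G)}$; the convergence $F^{t_n} \to \tilde{F}$ and $G^{t_n} \to \tilde{G}$ is now only uniform on bounded subsets of $\mathbb{R} \times \mathbb{H}$ rather than uniform in the time variable. Following the tail-splitting technique of Theorem \ref{theorem 3.2}(2), I would split each integral as $\int_{-\infty}^t = \int_{-\infty}^{-\tilde{t}} + \int_{-\tilde{t}}^t$: the tail is exponentially damped by $e^{-\alpha(\tilde{t}-T)}$ times the uniform $L^2$ bound $R$, while the compact piece is controlled by $\max_{|\tau| \leq \tilde{t}} \mathbb{E}|F^{t_n}(\tau,\tilde{\xi}(\tau)) - \tilde{F}(\tau,\tilde{\xi}(\tau))|^2 \to 0$, which uses the inclusion $\tilde{\xi}(\tau) \in B[0,R]$ together with uniform convergence on $[-\tilde{t},\tilde{t}] \times B[0,R]$. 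The same contraction argument absorbs the Lipschitz term under $2\theta_1 < 1$, producing $\max_{|t| \leq T} \mathbb{E}|\tilde{\xi}_n - \tilde{\xi}|^2 \to 0$ for every $T > 0$, hence convergence in distribution uniformly on every compact interval, so that $\{t_n\} \in \tilde{\mathfrak{M}}_\xi$. The main obstacle throughout Part 2 is that one cannot compare $\xi(\cdot + t_n)$ and $\tilde{\xi}(\cdot)$ pathwise because they are driven by different time-shifted fBm paths; the auxiliary process $\tilde{\xi}_n$, together with the distributional invariance of $B^H_Q$ under time shifts provided by the stationarity of its increments, is the central technical device that bridges this gap and converts a pathwise question into one amenable to the $L^2$ estimates developed in Section 3.
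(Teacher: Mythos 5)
Your proposal is correct and follows essentially the same route as the paper: a Banach fixed-point argument on the ball $C(\mathbb{R},B[0,R])$ for Part 1, and for Part 2 the auxiliary translated solutions $\xi_n$ (identified in law with $\xi(\cdot+t_n)$ via stationarity of fBm increments), the triangle splitting of $F^{t_n}(\cdot,\xi_n)-\tilde F(\cdot,\tilde\xi)$ into a Lipschitz term absorbed under the strengthened smallness of $L$ plus a term vanishing by hull convergence, with the tail-splitting $\int_{-\infty}^{t}=\int_{-\infty}^{-\tilde t}+\int_{-\tilde t}^{t}$ for the compact-interval case in 2(b). The constants you track ($2\theta_1<1$ versus the paper's positive coefficient $1-2N^2L^2(\alpha^{-2}+\hat C)$) differ only cosmetically.
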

	\begin{proof}
		1. Note that $C(\mathbb{R}, B[0,R])$ is a complete metric space. By Theorem $\ref{theorem 4.3}$, it follows that the operator $\mathcal{T}$ maps the space $C_b(\mathbb{R},L^2(\mathbb{P}, \mathbb{H}))$ into itself. We now proceed to prove that
		\begin{displaymath}
			\begin{aligned}
				\mathcal{T} : C(\mathbb{R}, B[0,R]) \to C(\mathbb{R}, B[0,R]).
			\end{aligned} \notag
		\end{displaymath}
		Let $\varphi \in C(\mathbb{R}, B[0,R])$, then we have
		\begin{equation}
			\begin{aligned}\label{3.8}
				(\mathcal{T}\varphi)(t) &= \int_{-\infty}^{t} {U(t - \tau)f(\tau, \varphi(\tau))} \,{\rm d}\tau + \int_{-\infty}^{t} {U(t - \tau)g(\tau)} \,{\rm d}B^H_{Q}(\tau)\\
				&= I_1 + I_2.
			\end{aligned}
		\end{equation}
		For the first term $I_1$, using the Cauchy-Schwarz inequality we get
		\begin{equation}
			\begin{aligned}\label{3.9}
				\mathbb{E}|I_1|^2 &= \mathbb{E} \bigg| \int_{-\infty}^{t} {U(t - \tau)f(\tau, \varphi(\tau))} \,{\rm d}\tau \bigg|^2 
				\\ & \leq \mathbb{E} \bigg[ \int_{-\infty}^{t} {Ne^{-{\alpha}(t - \tau)} |f(\tau, \varphi(\tau))|} \,{\rm d}\tau \bigg]^2 
				\\ & \leq {N}^2  \int_{-\infty}^{t} {e^{-{\alpha}(t - \tau)} } \,{\rm d}\tau \int_{-\infty}^{t} {e^{-{\alpha}(t - \tau)} \mathbb{E}|f(\tau, \varphi(\tau))|^2} \,{\rm d}\tau 
				\\ & \leq \frac{{N}^2}{\alpha^2} \sup\limits_{\tau \in \mathbb{R}}\mathbb{E}|f(\tau, \varphi(\tau))|^2
				\\ & \leq \frac{(C_f + LR)^2 {N}^2}{\alpha^2}.
			\end{aligned}
		\end{equation}
		For the second term $I_2$, using Hölder inequality and lemma $\ref{lemma 2.4}$ yields
		\begin{align}\label{3.10}
			\notag \mathbb{E}|I_2|^2 &= \mathbb{E} \bigg| \int_{-\infty}^{t} {U(t - \tau)g(\tau)} \,{\rm d}B^H_{Q}(\tau) \bigg|^2 
			\\ \notag & = \sum_{n = 1}^{\infty} \mathbb{E} \bigg| \int_{-\infty}^{t} {U(t - \tau)g(\tau)Q^{\frac{1}{2}}e_n } \,{\rm d}\beta^H_{n}(\tau) \bigg|^2 
			\\\notag  & = \sum_{n = 1}^{\infty}H(2H - 1)  \int_{-\infty}^{t} \int_{-\infty}^{t} {\big| U(t - s)g(s)Q^{\frac{1}{2}}e_n \big| \cdot \big| U(t - \tau)g(\tau)Q^{\frac{1}{2}}e_n \big| }
			{ \cdot \big|s - \tau\big|^{2H - 2}} \,{\rm d}s  \,{\rm d}\tau 
			\\ & \leq C_H \sum_{n = 1}^{\infty} \bigg( \int_{-\infty}^{t} \big| {Ne^{-{\alpha}(t - \tau)}g(\tau)Q^{\frac{1}{2}}e_n \big|^{\frac{1}{H}} } \,{\rm d}\tau  \bigg)^{2H}
			\\\notag & \leq C_H N^2 \bigg( \int_{-\infty}^{t} \big| {e^{-\frac{{\alpha}}{2}(t - \tau)} \big|^{\frac{2}{2H-1}} } \,{\rm d}\tau  \bigg)^{2H - 1} \sum_{n = 1}^{\infty}\bigg( \int_{-\infty}^{t}  \big| e^{-\frac{{\alpha}}{2}(t - \tau)}{g(\tau)Q^{\frac{1}{2}}e_n} \big|^{2} \,{\rm d}\tau  \bigg)
			\\\notag & \leq C_H N^2 (\frac{2H - 1}{{\alpha}})^{2H - 1} \sum_{n = 1}^{\infty}\bigg( \int_{-\infty}^{t} e^{-{\alpha}(t - \tau)}  \big|{g(\tau)Q^{\frac{1}{2}}e_n} \big|^{2} \,{\rm d}\tau  \bigg)
			\\\notag & \leq C_H N^2 \frac{(2H - 1)^{2H - 1}}{{\alpha}^{2H}} \sup\limits_{\tau \in \mathbb{R}} \Vert g(\tau)\Vert^2_{L^0_Q(\mathbb{U}, \mathbb{H})}
			\\\notag & \leq C_H C_g N^2 \frac{(2H - 1)^{2H - 1}}{{\alpha}^{2H}}.
		\end{align}
		From $\eqref{3.8}$-$\eqref{3.10}$ we have
		\begin{align*}
			\mathbb{E}|(\mathcal{T}\varphi)(t)|^2 = 2\left(\mathbb{E}|I_1|^2 + \mathbb{E}|I_2|^2\right) \leq \frac{2 {N}^2}{\alpha^2} \left( (C_f + LR)^2 + C_H C_g {\alpha}^{2 - 2H}(2H - 1)^{2H - 1} \right).
		\end{align*}
		So,
		\begin{equation}
			\begin{aligned}
				\sup\limits_{t \in \mathbb{R}} \mathbb{E}|(\mathcal{T}\varphi)(t)| \leq \frac{\sqrt{2} {N}}{\alpha} \left( (C_f + LR)^2 + C_H C_g {\alpha}^{2 - 2H}(2H - 1)^{2H - 1} \right)^{\frac{1}{2}} = R.
			\end{aligned}\notag
		\end{equation}
		Thus $\mathcal{T}$ is well defined.
		
		The contraction property of the operator $ \mathcal{T} $ follows from Theorem $\ref{theorem 4.3}$. When $ L < \frac{{\alpha}}{2N} $, we have $ \theta_1 =  \frac{N^2 L^2}{{\alpha}^2} < 1 $. Thus, the operator $ \mathcal{T} $ is a contraction mapping. By the contraction mapping theorem, there exists a unique function $ \xi \in C(\mathbb{R}, B[0, R]) $ such that $ \mathcal{T}(\xi) = \xi $.
		
		2-(a). Let $\left\{ t_n \right\} \in \mathfrak{M}^u_{(f, g)}$. Then there exists $(\tilde{f}, \tilde{g}) \in \bar{H}(f, g)$ such that for any $R > 0$
		\begin{equation}\label{4.6}
			\begin{aligned}
				\sup\limits_{t \in \mathbb{R}, |x| \leq R} |f(t + t_n, x) - \tilde{f}(t, x)| \to 0
			\end{aligned} 
		\end{equation}
		and
		\begin{equation}\label{4.7}
			\begin{aligned}
				\sup\limits_{t \in \mathbb{R}} |g(t + t_n) - \tilde{g}(t)| \to 0
			\end{aligned} 
		\end{equation}
		as $n \to \infty$. Due to the definition of self adjoint operator $Q$, we can obtain
		\begin{equation}
			\begin{aligned}
				\sup\limits_{t \in \mathbb{R}}|g(t + t_n) - \tilde{g}(t)|^2_{L^0_Q(\mathbb{U}, \mathbb{H})} \to 0
			\end{aligned}\notag
		\end{equation}
		as $n \to \infty$. Consider the following two equations:
		\begin{equation}\label{4.8}
			\begin{aligned}
				\mathrm{d} X(t)= \big( AX(t) + f^{t_n}(t, X(t)) \big) \mathrm{d}t + g^{t_n}(t)\mathrm{d}B^H_{Q}(t)
			\end{aligned}
		\end{equation}
		and
		\begin{equation}\label{4.9}
			\begin{aligned}
				\mathrm{d} X(t)= \big( AX(t) + \tilde{f}(t, X(t)) \big) \mathrm{d}t + \tilde{g}(t)\mathrm{d}B^H_{Q}(t),
			\end{aligned}
		\end{equation}
		where $f^{t_n}(t, \cdot) := f(t + t_n, \cdot)$ and $g^{t_n}(t) := f(t + t_n)$. 
		
		Firstly, regarding the existence and uniqueness of the solution of the equation $\eqref{4.8}$(respectively, equation $\eqref{4.9}$), we can directly obtain it from part $(1)$ of this theorem. Then we will show that for any $t \in \mathbb{R}$, the solution $\left\{ x_n(t) \right\}_{n \geq 1}$ of equation $\eqref{4.8}$ uniformly converges to the solution $\tilde{x}(t)$ of equation $\eqref{4.9}$ in $L^2$ norm.
		
		We can check that $\xi_n := x_n - \tilde{x}$ is the unique solution from $C(\mathbb{R}, B[0, 2R])$ of the equation
		\begin{equation}\label{4.10}
			\begin{aligned}
				\mathrm{d} X(t)= \big( AX(t) + f^{t_n}(t, x_n(t)) - \tilde{f}(t, \tilde{x}(t)) \big) \mathrm{d}t + \big( g^{t_n}(t) -  \tilde{g}(t) \big)\mathrm{d}B^H_{Q}(t),
			\end{aligned}
		\end{equation}
		where $f^{t_n} - \tilde{f} \in C_b(\mathbb{R} \times \mathbb{H}, L^2(\mathbb{R}\times \mathbb{H}, \mathbb{H})), g^{t_n} - \tilde{g} \in C_b(\mathbb{R}, L^0_Q(\mathbb{U}, \mathbb{H}))$. Similar to the proof of part $(1)$ of this theorem, we have
		\begin{align*}
			\mathbb{E}|\xi_n|^2 & = \mathbb{E} \bigg| \int_{-\infty}^{t} {U(t - \tau) (f^{t_n}(\tau, x_n(\tau)) - \tilde{f}(\tau,\tilde{x}(\tau)))} \,{\rm d}\tau + \int_{-\infty}^{t} {U(t - \tau) (g^{t_n}(\tau) -  \tilde{g}(\tau))} \,{\rm d}B^H_{Q}(\tau) \bigg|^2 
			\\ & \leq 2 \mathbb{E} \bigg| \int_{-\infty}^{t} {U(t - \tau) (f^{t_n}(\tau, x_n(\tau)) - \tilde{f}(\tau, \tilde{x}(\tau)))} \,{\rm d}\tau \bigg|^2  + 2 \mathbb{E} \bigg| \int_{-\infty}^{t} {U(t - \tau) (g^{t_n}(\tau) -  \tilde{g}(\tau))} \,{\rm d}B^H_{Q}(\tau) \bigg|^2 
			\\ & := I_1 + I_2.
		\end{align*}
		For the first item $I_1$, by the Cauchy-Schwarz inequality we have
		\begin{align*}
			I_1 &= 2 \mathbb{E} \bigg| \int_{-\infty}^{t} {U(t - \tau) (f^{t_n}(\tau, x_n(\tau)) - \tilde{f}(\tau,\tilde{x}(\tau)))} \,{\rm d}\tau \bigg|^2 
			\\ & \leq 2N^2  \int_{-\infty}^{t} {e^{-{\alpha}(t - \tau)} } \,{\rm d}\tau \int_{-\infty}^{t} {e^{-{\alpha}(t - \tau)} \mathbb{E}|f^{t_n}(\tau, x_n(\tau)) - \tilde{f}(\tau,\tilde{x}(\tau))|^2} \,{\rm d}\tau 
			\\ & \leq \frac{2 N^2}{{\alpha}^2}\sup\limits_{\tau \in \mathbb{R}} \mathbb{E} |f^{t_n}(\tau, x_n(\tau)) - \tilde{f}(\tau,\tilde{x}(\tau))|^2
			\\ & \leq \frac{2 N^2}{{\alpha}^2} \sup\limits_{\tau \in \mathbb{R}}\mathbb{E} \big| f^{t_n}(\tau, x_n(\tau)) - f^{t_n}(\tau, \tilde{x}(\tau)) + f^{t_n}(\tau, \tilde{x}(\tau)) - \tilde{f}(\tau, \tilde{x}(\tau))  \big|^2
			\\ & \leq \frac{4N^2}{{\alpha}^2} \sup\limits_{\tau \in \mathbb{R}} \left( \mathbb{E} \big| f^{t_n}(\tau, x_n(\tau)) - f^{t_n}(\tau, \tilde{x}(\tau)) \big|^2 + \mathbb{E} \big| f^{t_n}(\tau, \tilde{x}(\tau)) - \tilde{f}(\tau, \tilde{x}(\tau))  \big|^2 \right)
			\\ & \leq \frac{4N^2}{{\alpha}^2} \left(L^2 \sup\limits_{\tau \in \mathbb{R}}\mathbb{E} \big| x_n(\tau) - \tilde{x}(\tau) \big|^2 + \sup\limits_{\tau \in \mathbb{R}} \mathbb{E} \big| f^{t_n}(\tau, \tilde{x}(\tau)) - \tilde{f}(\tau, \tilde{x}(\tau))  \big|^2 \right)
			\\ & \leq \frac{4N^2}{{\alpha}^2} \left(L^2 \sup\limits_{\tau \in \mathbb{R}} \mathbb{E} | \xi_n |^2 + \sup\limits_{\tau \in \mathbb{R}} \mathbb{E} F^2_{n,\tau} \right),
		\end{align*}
		where
		\begin{align*}
			F_{n,\tau} := \big| f^{t_n}(\tau, \tilde{x}(\tau) - \tilde{f}(\tau, \tilde{x}(\tau))  \big|.
		\end{align*}
		For the second item $I_2$, we have
		\begin{align*}
			I_2 &= 2\mathbb{E} \bigg| \int_{-\infty}^{t} {U(t - \tau) (g^{t_n}(\tau) -  \tilde{g}(\tau))} \,{\rm d}B^H_{Q}(\tau) \bigg|^2
			\\  & = 2\sum_{n = 1}^{\infty} \mathbb{E} \bigg| \int_{-\infty}^{t} {U(t - \tau) (g^{t_n}(\tau) -  \tilde{g}(\tau)) Q^{\frac{1}{2}}e_n } \,{\rm d}\beta^H_{n}(\tau) \bigg|^2 
			\\ & \leq 2C_H \sum_{n = 1}^{\infty} \bigg( \int_{-\infty}^{t} \big| {Ne^{-{\alpha}(t - \tau)} (g^{t_n}(\tau) -  \tilde{g}(\tau)) Q^{\frac{1}{2}}e_n \big|^{\frac{1}{H}} } \,{\rm d}\tau  \bigg)^{2H}
			\\ & \leq 2C_H N^2 \bigg( \int_{-\infty}^{t} \big| {e^{-\frac{{\alpha}}{2}(t - \tau)} \big|^{\frac{2}{2H-1}} } \,{\rm d}\tau  \bigg)^{2H - 1} \sum_{n = 1}^{\infty}\bigg( \int_{-\infty}^{t}  \big| e^{-\frac{{\alpha}}{2}(t - \tau)}{(g^{t_n}(\tau) -  \tilde{g}(\tau)) Q^{\frac{1}{2}}e_n} \big|^{2} \,{\rm d}\tau  \bigg)
			\\ & \leq 2C_H N^2 (\frac{2H - 1}{{\alpha}})^{2H - 1} \sum_{n = 1}^{\infty}\bigg( \int_{-\infty}^{t} e^{-{\alpha}(t - \tau)}  \big|{(g^{t_n}(\tau) -  \tilde{g}(\tau)) Q^{\frac{1}{2}}e_n} \big|^{2} \,{\rm d}\tau  \bigg)
			\\ & \leq 2C_H N^2 \frac{(2H - 1)^{2H - 1}}{{\alpha}^{2H}} \sup\limits_{\tau \in \mathbb{R}} \Vert g^{t_n}(\tau) -  \tilde{g}(\tau)\Vert^2_{L^0_Q(\mathbb{U}, \mathbb{H})}
			\\ & \leq 2C_H N^2 \frac{(2H - 1)^{2H - 1}}{{\alpha}^{2H}} \sup\limits_{\tau \in \mathbb{R}} \mathbb{E} G^2_{n,\tau},
		\end{align*}
		where
		\begin{displaymath}
			\begin{aligned}
				G_{n,\tau} := \Vert g^{t_n}(\tau) -  \tilde{g}(\tau)\Vert_{L^0_Q(\mathbb{U}, \mathbb{H})}.
			\end{aligned}
		\end{displaymath}
		Therefore, we have 
		\begin{displaymath}
			\begin{aligned}
				\sup\limits_{t \in \mathbb{R}} |\xi_n|^2 \leq \frac{4N^2}{{\alpha}^2} \left(L^2 \sup\limits_{\tau \in \mathbb{R}} \mathbb{E} | \xi_n |^2 + \sup\limits_{\tau \in \mathbb{R}} \mathbb{E} F^2_{n,\tau} \right) + 2C_H N^2 \frac{(2H - 1)^{2H - 1}}{{\alpha}^{2H}} \sup\limits_{\tau \in \mathbb{R}} \mathbb{E} G^2_{n,\tau}.
			\end{aligned}
		\end{displaymath}
		Consequently,
		\begin{equation}\label{4.11}
			\begin{aligned}
				\left( 1 - \frac{4N^2 L^2}{{\alpha}^2} \right)\sup\limits_{\tau \in \mathbb{R}} \mathbb{E} |\xi_n|^2 
				\leq \frac{4N^2}{{\alpha}^2} \sup\limits_{\tau \in \mathbb{R}} \mathbb{E} F^2_{n,\tau} + 2C_H N^2 \frac{(2H - 1)^{2H - 1}}{{\alpha}^{2H}} \sup\limits_{\tau \in \mathbb{R}} \mathbb{E} G^2_{n,\tau}.
			\end{aligned}
		\end{equation}
		Due to $L < \frac{{\alpha}}{2N}$, the coefficient of $\sup\limits_{\tau \in \mathbb{R}} \mathbb{E} |\xi_n|^2$ is positive. It is easy to note that the families
		\begin{displaymath}
			\begin{aligned}
				\left\{ |\tilde{x}(\tau)|^2 : \tau \in \mathbb{R} \right\} \quad \text{and} \quad \left\{ |{x}_n(\tau)|^2 :n \in \mathbb{N}, \tau \in \mathbb{R} \right\}
			\end{aligned}
		\end{displaymath}
		are uniformly integrable, and according to conditions $(C1)$ and $(C2)$, the families
		\begin{displaymath}
			\begin{aligned}
				\left\{ F_{n,\tau}^2 : n \in \mathbb{N}, \tau \in \mathbb{R} \right\} \quad \text{and} \quad \left\{ G_{n,\tau}^2 :n \in \mathbb{N}, \tau \in \mathbb{R} \right\}
			\end{aligned}
		\end{displaymath}
		are uniformly integrable. These together with $\eqref{4.6}$ and $\eqref{4.7}$ imply that taking the limit in $\eqref{4.11}$, we obtain $x_n(t) \to \tilde{x}(t)$ uniformly in $t \in \mathbb{R}$ in $L^2$-norm. Since $L^2$ convergence implies convergence in distribution, we have $x_n(t) \to \tilde{x}(t)$ in distribution uniformly on $\mathbb{R}$. Thus we have $\left\{ t_n \right\} \in \mathfrak{M}^u_{x}$.
		
		2-(b). Let $\left\{ t_n \right\} \in \mathfrak{M}_{(f, g)}$. Then there exists $( \tilde{f}, \tilde{g}) \in \bar{H}(f, g)$ such that for any $R, T > 0$,
		\begin{equation}\label{4.12}
			\begin{aligned}
				\sup\limits_{|t| \leq T, |x| \leq R} |f(t + t_n, x) - \tilde{f}(t, x)| \to 0
			\end{aligned} 
		\end{equation}
		and
		\begin{equation}\label{4.13}
			\begin{aligned}
				\sup\limits_{|t| \leq T} |g(t + t_n) - \tilde{g}(t)| \to 0
			\end{aligned} 
		\end{equation}
		as $n \to \infty$. Likewise, we have
		\begin{equation}
			\begin{aligned}
				\sup\limits_{|t| \leq T}|g(t + t_n) - \tilde{g}(t)|^2_{L^0_Q(\mathbb{U}, \mathbb{H})} \to 0
			\end{aligned}\notag
		\end{equation}
		as $n \to \infty$. Similar to the proof of (ii)-(a): let $x_n$ and $\tilde{x}$ be the unique bounded solutions of equation $\eqref{4.8}$ and equation $\eqref{4.9}$ respectively, and $\xi_n := x_n - \tilde{x}$ represent the unique bounded solution of equation $\eqref{4.10}$. Next, we will prove that $\xi_n \to 0$ in the space $C(\mathbb{R}, L^2(\mathbb{R},\mathbb{H}))$, i.e. $\lim\limits_{n \to \infty} \max\limits_{|t| \leq T} \mathbb{E}|\xi_n(t)|^2= 0$ for any $T > 0$.
		
		Let $T > 0, t \in [-T, T], \tilde{t} > T$. We have the following estimate for the solution $\xi_n$ of equation $\eqref{4.10}$:
		\begin{displaymath}
			\begin{aligned}
				\mathbb{E}|\xi_n|^2 & = \mathbb{E} \bigg| \int_{-\infty}^{t} {U(t - \tau) (f^{t_n}(\tau, x_n(\tau)) - \tilde{f}(\tau,\tilde{x}(\tau)))} \,{\rm d}\tau + \int_{-\infty}^{t} {U(t - \tau) (g^{t_n}(\tau) -  \tilde{g}(\tau))} \,{\rm d}B^H_{Q}(\tau) \bigg|^2 
				\\ & \leq 2 \left[ \mathbb{E} \bigg| \int_{-\infty}^{t} {Ne^{-{\alpha}(t - \tau)} (f^{t_n}(\tau, x_n(\tau)) - \tilde{f}(\tau,\tilde{x}(\tau)))} \,{\rm d}\tau \bigg|^2  
				\! + \mathbb{E} \bigg| \int_{-\infty}^{t} {Ne^{-{\alpha}(t - \tau)} (g^{t_n}(\tau) -  \tilde{g}(\tau))} \,{\rm d}B^H_{Q}(\tau) \bigg|^2  \right]
				\\ & \leq 4N^2 \left[ \mathbb{E} \bigg| \int_{-\tilde{t}}^{t} {e^{-{\alpha}(t - \tau)} (f^{t_n}(\tau, x_n(\tau)) - \tilde{f}(\tau,\tilde{x}(\tau)))} \,{\rm d}\tau \bigg|^2  
				\! +  \mathbb{E}\bigg| \int_{-\infty}^{-\tilde{t}} {e^{-{\alpha}(t - \tau)} (f^{t_n}(\tau, x_n(\tau)) - \tilde{f}(\tau,\tilde{x}(\tau)))} \,{\rm d}\tau \bigg|^2 \right.
				\\ & \quad \left.+ \mathbb{E} \bigg| \int_{-\tilde{t}}^{t} {e^{-{\alpha}(t - \tau)} (g^{t_n}(\tau) -  \tilde{g}(\tau))} \,{\rm d}B^H_{Q}(\tau) \bigg|^2  
				\! + \mathbb{E}\bigg| \int_{-\infty}^{-\tilde{t}} {e^{-{\alpha}(t - \tau)} (g^{t_n}(\tau) -  \tilde{g}(\tau))} \,{\rm d}B^H_{Q}(\tau) \bigg|^2 \right]	
				\\ &:= 4N^2(J_1 + J_2 + J_3 + J_4).
			\end{aligned}
		\end{displaymath}
		For the first item $J_1$,
		\begin{displaymath}
			\begin{aligned}
				J_1 &=  \mathbb{E} \bigg| \int_{-\tilde{t}}^{t} {e^{-{\alpha}(t - \tau)} (f^{t_n}(\tau, x_n(\tau)) - \tilde{f}(\tau,\tilde{x}(\tau)))} \,{\rm d}\tau \bigg|^2  
				\\ & \leq \bigg| \int_{-\tilde{t}}^{t} {e^{-{\alpha}(t - \tau)}} \,{\rm d}\tau \bigg|^2 \sup\limits_{|t| \leq \tilde{t}}\mathbb{E}|f^{t_n}(\tau, x_n(\tau)) - \tilde{f}(\tau,\tilde{x}(\tau))|^2 
				\\ & \leq \frac{1}{\alpha^2} {\left(1 - e^{-{\alpha}(t + \tilde{t})}\right)}^2 \sup\limits_{|t| \leq \tilde{t}}\mathbb{E}|f^{t_n}(\tau, x_n(\tau)) - \tilde{f}(\tau,\tilde{x}(\tau))|^2.
			\end{aligned}
		\end{displaymath}
		For the second item $J_2$,
		\begin{align*}
			J_2 &= \mathbb{E}\bigg| \int_{-\infty}^{-\tilde{t}} {e^{-{\alpha}(t - \tau)} (f^{t_n}(\tau, x_n(\tau)) - \tilde{f}(\tau,\tilde{x}(\tau)))} \,{\rm d}\tau \bigg|^2
			\\ & \leq \bigg| \int_{-\infty}^{-\tilde{t}} {e^{-{\alpha}(t - \tau)} } \,{\rm d}\tau \bigg|^2 \sup\limits_{t \in \mathbb{R}}\mathbb{E}|f^{t_n}(\tau, x_n(\tau)) - \tilde{f}(\tau,\tilde{x}(\tau))|^2 
			\\ & \leq \frac{1}{\alpha^2} {e^{-2{\alpha}(t + \tilde{t})}} \sup\limits_{t \in \mathbb{R}}\mathbb{E}|f^{t_n}(\tau, x_n(\tau)) - \tilde{f}(\tau,\tilde{x}(\tau))|^2.
		\end{align*}
		For the third item $J_3$,
		\begin{align*}
			J_3 &= \mathbb{E} \bigg| \int_{-\tilde{t}}^{t} {e^{-{\alpha}(t - \tau)} (g^{t_n}(\tau) -  \tilde{g}(\tau))} \,{\rm d}B^H_{Q}(\tau) \bigg|^2
			\\ & \leq C_H \sum_{n = 1}^{\infty} \bigg( \int_{-\tilde{t}}^{t} \big| {e^{-{\alpha}(t - \tau)} (g^{t_n}(\tau) -  \tilde{g}(\tau)) Q^{\frac{1}{2}}e_n \big|^{\frac{1}{H}} } \,{\rm d}\tau  \bigg)^{2H}
			\\ & \leq C_H \bigg( \int_{-\tilde{t}}^{t} \big| {e^{-\frac{{\alpha}}{2}(t - \tau)} \big|^{\frac{2}{2H-1}} } \,{\rm d}\tau  \bigg)^{2H - 1} \sum_{n = 1}^{\infty}\bigg( \int_{-\infty}^{t}  \big| e^{-\frac{{\alpha}}{2}(t - \tau)}{(g^{t_n}(\tau) -  \tilde{g}(\tau)) Q^{\frac{1}{2}}e_n} \big|^{2} \,{\rm d}\tau  \bigg)
			\\ & \leq C_H \left( \frac{2H - 1}{{\alpha}} \left( 1 - e^{\frac{\alpha(\tilde{t} - t)}{2H - 1}}\right) \right)^{2H - 1} \sum_{n = 1}^{\infty}\bigg( \int_{-\tilde{t}}^{t} e^{-{\alpha}(t - \tau)}  \big|{(g^{t_n}(\tau) -  \tilde{g}(\tau)) Q^{\frac{1}{2}}e_n} \big|^{2} \,{\rm d}\tau  \bigg)
			\\ & \leq C_H \frac{(2H - 1)^{2H - 1}}{{\alpha}^{2H}} \sup\limits_{|t| \leq \tilde{t}} \Vert g^{t_n}(\tau) -  \tilde{g}(\tau)\Vert^2_{L^0_Q(\mathbb{U}, \mathbb{H})}
			\\ & \leq C(\alpha,H) \sup\limits_{|t| \leq \tilde{t}} \mathbb{E} G^2_{n,\tau}.
		\end{align*}
		For the fourth item $J_4$,
		\begin{align*}
			J_4 &= \mathbb{E}\bigg| \int_{-\infty}^{-\tilde{t}} {e^{-{\alpha}(t - \tau)} (g^{t_n}(\tau) -  \tilde{g}(\tau))} \,{\rm d}B^H_{Q}(\tau) \bigg|^2
			\\  & = e^{-2{\alpha}(t  + \tilde{t})} \sum_{n = 1}^{\infty} \mathbb{E} \bigg| \int_{-\infty}^{-\tilde{t}} {e^{-{\alpha}(-\tilde{t} - \tau)} (g^{t_n}(\tau) -  \tilde{g}(\tau)) Q^{\frac{1}{2}}e_n } \,{\rm d}\beta^H_{n}(\tau) \bigg|^2 
			\\ & \leq e^{-2{\alpha}(t  + \tilde{t})} C_H  \frac{(2H - 1)^{2H - 1}}{{\alpha}^{2H}} \sup\limits_{\tau \in \mathbb{R}} \Vert g^{t_n}(\tau) -  \tilde{g}(\tau)\Vert^2_{L^0_Q(\mathbb{U}, \mathbb{H})}
			\\ & \leq C(\alpha,H) e^{-2{\alpha}(t  + \tilde{t})} \sup\limits_{\tau \in \mathbb{R}} \mathbb{E} G^2_{n,\tau}.
		\end{align*}
		Therefore,
		\begin{displaymath}
			\begin{aligned}
				\mathbb{E}|\xi_n|^2 & \leq 4N^2 \bigg[ \frac{1}{v^2} {(1 - e^{-{\alpha}(t + \tilde{t})})}^2 \sup\limits_{|t| \leq \tilde{t}}\mathbb{E}|f^{t_n}(\tau, x_n(\tau)) - \tilde{f}(\tau,\tilde{x}(\tau))|^2 
				\\ & \quad  +  \frac{1}{{\alpha}^2} {e^{-2{\alpha}(t + \tilde{t})}} \sup\limits_{t \in \mathbb{R}}\mathbb{E}|f^{t_n}(\tau, x_n(\tau)) - \tilde{f}(\tau,\tilde{x}(\tau))|^2 
				\\ & \quad + C(\alpha,H) \sup\limits_{|t| \leq \tilde{t}} \mathbb{E} G^2_{n,\tau} + C(\alpha,H) e^{-2{\alpha}(t  + \tilde{t})} \sup\limits_{\tau \in \mathbb{R}} \mathbb{E} G^2_{n,\tau} \bigg].
			\end{aligned}
		\end{displaymath}
		Form the proof of 2-(a), we have
		\begin{equation}\label{4.16}
			\begin{aligned}
				\sup\limits_{\tau \in \mathbb{R}} \mathbb{E}|f^{t_n}(\tau, x_n(\tau)) - \tilde{f}(\tau,\tilde{x}(\tau))|^2 
				\leq 2 \sup\limits_{\tau \in \mathbb{R}} \big( \mathbb{E} \big| f^{t_n}(\tau, \xi_n(\tau)) \big|^2 + \mathbb{E} \big| \tilde{f}(\tau, \tilde{\xi}(\tau))  \big|^2 \big)
				\leq 4(C_f + LR)^2
			\end{aligned} \notag
		\end{equation}
		for any $n \in \mathbb{N}$.
		Using the same arguments as above we have
		\begin{equation}\label{4.17}
			\begin{aligned}
				\sup\limits_{\tau \in \mathbb{R}} \mathbb{E}|g^{t_n}(\tau)- \tilde{g}(\tau)|^2  \leq 4 C_g^2
			\end{aligned} \notag
		\end{equation}
		for any $n \in \mathbb{N}$. Then
		\begin{align*}
			\sup\limits_{|t| < T} \mathbb{E}|\xi_n|^2 &\leq 4N^2 \left[ \frac{2}{{\alpha}^2} {\left(1 - e^{-{\alpha}(T + \tilde{t})}\right)}^2 \left( L^2 \mathbb{E} \big| \xi_n \big|^2 + \sup\limits_{|t| \leq \tilde{t}} \mathbb{E} F^2_{n,\tau} \right)  
			+  \frac{4}{{\alpha}^2} {e^{-2{\alpha}(\tilde{t} - T)}} (C_f + LR)^2 \right.
			\\ & \quad + \left. C(\alpha,H) \left( \sup\limits_{|t| \leq \tilde{t}} \mathbb{E} G^2_{n,\tau} + e^{-2{\alpha}(t  + \tilde{t})} C_g^2 \right) \right].
		\end{align*}
		By organizing we obtain
		\begin{equation}\label{36}
			\begin{aligned}
				&\quad \left( 1 - \frac{8N^2 L^2}{{\alpha}^2} {(1 - e^{-{\alpha}(T + \tilde{t})})}^2 \right)\sup\limits_{|t| < T} \mathbb{E} |\xi_n|^2
				\\ &\leq  \frac{8N^2}{{\alpha}^2} \sup\limits_{|t| < \tilde{t}} \mathbb{E} F^2_{n,\tau}
				+ \frac{16N^2}{{\alpha}^2} {e^{-2{\alpha}(\tilde{t} - T)}} (C_f + LR)^2
				+ 4N^2 C(\alpha,H) \left( \sup\limits_{|t| \leq \tilde{t}} \mathbb{E} G^2_{n,\tau} + e^{-2{\alpha}(t  + \tilde{t})} C_g^2\right).
			\end{aligned}
		\end{equation}
		Under the assumption $L < \frac{{\alpha}}{2\sqrt{2}N}$, we have
		\begin{displaymath}
			\begin{aligned}
				\left( 1 - \frac{8N^2 L^2}{{\alpha}^2} {(1 - e^{-{\alpha}(T + \tilde{t})})}^2 \right) > 0.
			\end{aligned}
		\end{displaymath}
		Let now $\left\{ \tilde{t}_n \right\}$ be a sequence of positive numbers such that $\tilde{t}_n \to \infty$ as $n \to \infty$. By inequality $\eqref{36}$ we have
		\begin{equation}\label{4.18}
			\begin{aligned}
				&\quad \left( 1 - \frac{8N^2 L^2}{{\alpha}^2} {(1 - e^{-{\alpha}(T + \tilde{t}_n)})}^2 \right)\sup\limits_{|t| < T} \mathbb{E} |\xi_n|^2
				\\ &\leq  \frac{8N^2}{{\alpha}^2}  \sup\limits_{|t| < \tilde{t}} \mathbb{E} F^2_{n,\tau}
				+ \frac{16N^2}{{\alpha}^2} {e^{-2{\alpha}(\tilde{t}_n - T)}} (C_f + LR)^2
				+ C(\alpha,H,N) \left( \sup\limits_{|t| \leq \tilde{t}_n} \mathbb{E} G^2_{n,\tau} + e^{-2{\alpha}(t  + \tilde{t}_n)} C_g^2\right).
			\end{aligned}
		\end{equation}
		Note that the families  $ \left\{ F_{n,\tau}^2 : n \in \mathbb{N}, \tau \in \mathbb{R} \right\} $ and $ \left\{ G_{n,\tau}^2 :n \in \mathbb{N}, \tau \in \mathbb{R} \right\}$ are uniformly integrable, and the constant $ C(\alpha,H,N) $ is independent of $ n $. Together with $\eqref{4.12}$, $\eqref{4.13}$ and Remark $\ref{remark 2.2}$ , this implies that by taking the limit in $\eqref{4.18}$ as $n \to \infty$, we obtain
		\begin{equation}
			\begin{aligned}
				\lim\limits_{n \to \infty} \max\limits_{|t| < T} \mathbb{E}|\xi_n|^2 = 0
			\end{aligned}
		\end{equation}
		for any $T > 0$. That is, $x_n(t) \to \tilde{x}(t)$ in distribution uniformly in $t \in [-T, T]$ for any $T > 0$ as $n \to \infty$. In addition, we can show that $x_n(t)$ and $x(t + t_n)$ share the same distribution, thus $\left\{ t_n \right\} \in \tilde{\mathfrak{M}}_{x}$, and $x$ is uniformly compatible in distribution. The theorem is completely proved.
	\end{proof}
	\begin{corollary}\label{corollary 2}
		Assume that the conditions of Theorem $\ref{theorem 4.5}$ hold.
		\begin{itemize}
			\item[1.] If the functions $f$ and $g$ are jointly stationary ${\rm (}$respectively, $\tau-$periodic, quasi-periodic with the spectrum of frequencies $v_1, v_2,..., v_k$, Bohr almost periodic, Bohr almost automorphic, Birkhoff recurrent, Lagrange stable, Levitan almost periodic, almost recurrent, Poisson stable${\rm )}$ in $t \in \mathbb{R}$ uniformly with respect to $x \in \mathbb{H}$ on every bounded subset, then so is the unique bounded solution $\xi$ of equation $\eqref{4.1}$ in distribution.
			\item[2.]  If $f$ and $g$ are jointly pseudo-periodic ${\rm (}$respectively, pseudo-recurrent${\rm )}$ and $f$ and $g$ are jointly Lagrange stable, in $t \in \mathbb{R}$ uniformly with respect to $x \in \mathbb{H}$ on every bounded subset, then the unique bounded solution $\xi$ of equation $\eqref{4.1}$ is pseudo-periodic ${\rm (}$respectively, pseudorecurrent${\rm )}$ in distribution.
		\end{itemize}
	\end{corollary}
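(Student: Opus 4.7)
The plan is to invoke the comparability framework of Shcherbakov as codified in Theorem \ref{theorem 2.23}, with the compatibility results already established in Theorem \ref{theorem 4.5} supplying the required sequence inclusions. Specifically, part 2(b) of Theorem \ref{theorem 4.5} yields $\mathfrak{M}_{(F,G)} \subseteq \tilde{\mathfrak{M}}_{\xi}$, i.e.\ uniform comparability in distribution; by part 1 of Theorem \ref{theorem 2.23} this implies the non-uniform inclusion $\mathfrak{N}_{(F,G)} \subseteq \tilde{\mathfrak{N}}_{\xi}$ as well. Part 2(a) of Theorem \ref{theorem 4.5} supplies the even stronger inclusion $\mathfrak{M}^u_{(F,G)} \subseteq \tilde{\mathfrak{M}}^u_{\xi}$.

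For statement 1, I would dispatch the listed recurrence types according to which clause of Theorem \ref{theorem 2.23} governs them. The classes stationary, $\tau$-periodic, Levitan almost periodic, almost recurrent, and Poisson stable are covered by Theorem \ref{theorem 2.23}, item 2, and so follow from comparability alone. The classes quasi-periodic, Bohr almost periodic, Bohr almost automorphic, Birkhoff recurrent, and Lagrange stable require the uniform variant and are covered by Theorem \ref{theorem 2.23}, item 3, applied via the inclusion from Theorem \ref{theorem 4.5}, 2(a). In each case the ``in distribution'' qualifier is handled by noting that $(\mathcal{P}(\mathbb{H}), \mathcal{D})$ is a complete separable metric space, so the definitions given in Section 2.3 apply verbatim to the curve of laws $t \mapsto \mathrm{Law}(\xi(t))$; moreover, the identification $\mathfrak{M}_{(F,G)} = \mathfrak{M}_{(\hat F,\hat G)}$ (and its uniform version) coming from the last lemma of Section 2.4 lets us translate joint recurrence of $(F,G)$ in $(t,x)$ into recurrence of the associated $BC$-valued maps in $t$, which is the form used in the definitions of Section 2.3.

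For statement 2, pseudo-periodicity and pseudo-recurrence are governed by Theorem \ref{theorem 2.23}, item 4, which requires uniform comparability together with Lagrange stability of the driving function. The Lagrange stability assumption on $(F,G)$ in the hypothesis is exactly what item 4 demands, and the uniform comparability is provided by Theorem \ref{theorem 4.5}, 2(a). Combining these delivers the claim.

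No new estimates beyond those already in Theorem \ref{theorem 4.5} are needed. The only care required is bookkeeping: for each named recurrence class, one has to match the correct level of comparability (plain versus uniform, and with or without the Lagrange-stable auxiliary assumption) between the driving coefficients and the law-valued solution. Since all the quantitative analysis was carried out in the proof of Theorem \ref{theorem 4.5}, the present corollary reduces to a routine application of Theorem \ref{theorem 2.23}, exactly parallel to the earlier corollary that followed Theorem \ref{theorem 3.2}.
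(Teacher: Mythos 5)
Your proposal is correct and follows exactly the route the paper intends: the corollary is a direct application of Theorem \ref{theorem 2.23} to the inclusions established in Theorem \ref{theorem 4.5} (mirroring the one-line proof of the corollary after Theorem \ref{theorem 3.2}). One small bookkeeping note: item 3 of Theorem \ref{theorem 2.23} asks for uniform comparability in the sense $\mathfrak{M}_{(F,G)} \subseteq \tilde{\mathfrak{M}}_{\xi}$, which is supplied by part 2(b) of Theorem \ref{theorem 4.5} rather than by the $\mathfrak{M}^u$-inclusion of part 2(a) that you cite there; since both inclusions hold under the hypotheses, the argument goes through unchanged.
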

	
	\section{Convergence for Poisson stable solutions}
	In this section, we consider the following stochastic semi-linear evolution equation:
	\begin{equation}\label{5.1}
		\begin{aligned}
			\mathrm{d} X(t)= \big( AX(t) + f(t,X(t)) \big) \mathrm{d}t + g(t)\mathrm{d}B^H_{Q}(t), 
		\end{aligned}
	\end{equation}
	where $A$ and $B^H_{Q}$ are the same as in equation $\eqref{1.1}$, and $f \in C(\mathbb{R} \times \mathbb{H}, L^2(\mathbb{P}, \mathbb{H})), g \in C(\mathbb{R}, L^0_Q(\mathbb{U}, \mathbb{H}))$ are $\mathcal{F}_t$-adapted. 
	\begin{definition}\label{definition 5.2}
		We say that an $\mathcal{F}_t$-adapted processes $\left\{x(t)\right\}_{t\geq s}$ is a mild solution of equation $\eqref{5.1}$ with initial value $x(s) = x_s (s \in \mathbb{R})$, if it satisfies the stochastic integral equation
		\begin{equation}
			\begin{aligned}
				x(t) = U(t - s)x_s + \int_{s}^{t} {U(t - \tau)f(\tau, x(\tau))} \,{\rm d}\tau + \int_{s}^{t} {U(t - \tau)g(\tau)} \,{\rm d}B^H_{Q}(\tau) , \quad t \geq s.
			\end{aligned} \notag
		\end{equation}
	\end{definition}
	\begin{theorem}\label{theorem 5.3}
		Consider equation $\eqref{5.1}$. Assume the following conditions hold:
		\begin{itemize}
			\item[1.] The semigroup $\left\{U(t)\right\}_{t\geq 0}$ generated by operator $A$ is exponentially stable;
			\item[2.] $f \in C(\mathbb{R} \times \mathbb{H}, L^2(\mathbb{P}, \mathbb{H}))$ is locally Lipschitz in $x \in \mathbb{H}$, and $ g \in C(\mathbb{R}, L^0_Q(\mathbb{U}, \mathbb{H}))$;
			\item[3.] There exists positive numbers $C_f, C_g, L \geq 0$ such that $|f(t, 0)| \leq C_f$, $ \Vert g(t)\Vert_{L^0_Q(\mathbb{U}, \mathbb{H})} \leq C_g$ for any $t \in \mathbb{R}$ and $Lip(f) \leq L$;
			\item[4.]  $L < \frac{{\alpha}}{\sqrt{6}N}$.
		\end{itemize}
		Then for any initial value $x_s$ with $\mathbb{E}|x_s|^2 < \infty$, we have
		\begin{equation}\label{5.2}
			\begin{aligned}
				\mathbb{E}|x(t; s, x_s)|^2 &\leq \frac{6N^2C_f^2 + 3N^2 C_g^2 C_H \alpha^{2-2H} {(2H - 1)^{2H-1}}}{{\alpha}^2 - 6N^2 L^2} 
				\\ & \quad + e^{-({\alpha} -  \frac{6N^2 L^2}{{\alpha}})(t - s)} \left(3N^2  \mathbb{E}|x_s|^2- \frac{6N^2C_f^2 + 3N^2 C_g^2 C_H \alpha^{2-2H} {(2H - 1)^{2H-1}}}{{\alpha}^2 - 6N^2 L^2}  \right),
			\end{aligned}
		\end{equation}
		and
		\begin{equation}\label{5.3}
			\begin{aligned}
				\limsup\limits_{t \to \infty}\mathbb{E}|x(t; s, x_s)|^2 \leq \frac{6N^2C_f^2 + 3N^2 C_g^2 C_H \alpha^{2-2H} {(2H - 1)^{2H-1}}}{{\alpha}^2 - 6N^2 L^2}  
			\end{aligned}
		\end{equation}
		for any $t \geq s$, and uniformly with respect to $x_s$ on every bounded subset of $L^2(\mathbb{P}, \mathbb{H})$, where $x(t; s, x_s)$ denotes the solution of equation $\eqref{5.1}$ passing through $x_s$ at the initial moment $s$.
	\end{theorem}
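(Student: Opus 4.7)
The plan is to start from the mild solution representation in Definition \ref{definition 5.2} and derive an integral inequality for $u(t):=\mathbb{E}|x(t;s,x_s)|^2$, which will then be solved via a Gronwall-type argument. First I would apply $(a+b+c)^2\le 3(a^2+b^2+c^2)$ to split
\[
\mathbb{E}|x(t)|^2 \le 3\mathbb{E}|U(t-s)x_s|^2 + 3\mathbb{E}\bigg|\!\int_s^t U(t-\tau)F(\tau,x(\tau))\,\mathrm{d}\tau\bigg|^2 + 3\mathbb{E}\bigg|\!\int_s^t U(t-\tau)G(\tau,x(\tau))\,\mathrm{d}B^H_Q(\tau)\bigg|^2.
\]
The first term is bounded by $3N^2 e^{-2\alpha(t-s)}\mathbb{E}|x_s|^2$ by exponential stability. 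For the deterministic integral I would insert $e^{-\alpha(t-\tau)/2}\cdot e^{-\alpha(t-\tau)/2}$ and apply Cauchy-Schwarz, as in the proof of Theorem \ref{theorem 3.2}, to obtain $\tfrac{3N^2}{\alpha}\int_s^t e^{-\alpha(t-\tau)}\mathbb{E}|F(\tau,x(\tau))|^2\,\mathrm{d}\tau$. For the stochastic integral I would follow exactly the chain of estimates leading to \eqref{3.10}, using Lemma \ref{lemma 2.4} together with the Hölder splitting with conjugate exponents $\tfrac{2H}{2H-1}$ and $2H$, so that the constant collapses into $\tilde{C}$ from Definition \ref{definition 5.3}. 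This yields the Gronwall-friendly form $N^2\tilde{C}\int_s^t e^{-\alpha(t-\tau)}\mathbb{E}|G(\tau,x(\tau))|^2_{L^0_Q(\mathbb{U},\mathbb{H})}\,\mathrm{d}\tau$.

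Next, invoking the sublinear growth condition (3), applied in the form $|F(\tau,x)|^2,\,|G(\tau,x)|^2_{L^0_Q(\mathbb{U},\mathbb{H})}\le 2C_1+2C_2|x|^2$ (by $(\sqrt{C_1}+\sqrt{C_2}|x|)^2\le 2C_1+2C_2|x|^2$), and combining, I would arrive at the integral inequality
\[
u(t)\le 3N^2 e^{-2\alpha(t-s)}\mathbb{E}|x_s|^2 + \frac{6N^2 C_1(1+\tilde{C}\alpha)}{\alpha^2}\bigl(1-e^{-\alpha(t-s)}\bigr) + R\int_s^t e^{-\alpha(t-\tau)} u(\tau)\,\mathrm{d}\tau,
\]
where $R:=6N^2C_2\!\left(\tfrac{1}{\alpha}+\tilde{C}\right)$. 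Under condition (4), one checks $R<\alpha$, so the "steady state" value
\[
C_\infty := \frac{6N^2 C_1(1+\tilde{C}\alpha)}{\alpha^2-6N^2 C_2(1+\tilde{C}\alpha)}
\]
is well-defined.

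The endgame is a clean Gronwall manipulation: set $w(t):=u(t)-C_\infty$ and observe that the algebraic identity $\tfrac{Q}{\alpha}+\tfrac{RC_\infty}{\alpha}=C_\infty$ (where $Q/\alpha$ is the constant part of the bound) collapses the constant terms, leaving
\[
w(t)\le \bigl(3N^2\mathbb{E}|x_s|^2-C_\infty\bigr)e^{-\alpha(t-s)}+R\int_s^t e^{-\alpha(t-\tau)}w(\tau)\,\mathrm{d}\tau.
\]
Multiplying by $e^{\alpha(t-s)}$ and applying the classical Gronwall lemma to $\tilde{w}(t):=e^{\alpha(t-s)}w(t)$ gives $\tilde{w}(t)\le(3N^2\mathbb{E}|x_s|^2-C_\infty)e^{R(t-s)}$, i.e.\ $w(t)\le (3N^2\mathbb{E}|x_s|^2-C_\infty)e^{-(\alpha-R)(t-s)}$, which is exactly \eqref{5.2}. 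Estimate \eqref{5.3} then follows immediately by taking $\limsup_{t\to\infty}$, since $\alpha-R>0$. Uniformity with respect to $x_s$ on bounded sets is automatic from the explicit dependence on $\mathbb{E}|x_s|^2$.

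The main obstacle I anticipate is the stochastic integral estimate: one must keep the $|G(\tau,x(\tau))|^2_{L^0_Q(\mathbb{U},\mathbb{H})}$ factor \emph{inside} the time integral (rather than pulling out a supremum, as was done in Lemma \ref{lemma 4.1}), since otherwise the Gronwall step collapses. This forces reusing the intermediate bound from \eqref{3.10} instead of the cleaner but weaker estimate \eqref{4.3}, and carefully identifying which constant in that chain corresponds to $\tilde{C}$ of Definition \ref{definition 5.3}. A secondary but purely bookkeeping issue is matching numerical constants: the factor $6$ arises from the interaction of the $(a+b+c)^2\le 3(\cdots)$ split with the $|F|^2\le 2(C_1+C_2|x|^2)$ bound, and one must verify the threshold in condition (4) is consistent with $R<\alpha$.
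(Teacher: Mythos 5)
Your proposal is correct and follows essentially the same route as the paper: the same $(a+b+c)^2\le 3(\cdots)$ splitting, the same Cauchy--Schwarz and H\"older/Lemma \ref{lemma 2.4} estimates keeping $\mathbb{E}|G|^2$ inside the time integral so the constant $\tilde{C}$ appears, the same use of $(\sqrt{C_1}+\sqrt{C_2}|x|)^2\le 2C_1+2C_2|x|^2$, and a Gronwall-type conclusion. The only (immaterial) difference is the endgame: the paper sets $u(t)=e^{\alpha t}\mathbb{E}|x(t)|^2$ and invokes a comparison principle with the explicitly solved linear ODE, whereas you subtract the steady-state value $C_\infty$ first and apply the classical Gronwall lemma -- these are equivalent manipulations yielding the same bound \eqref{5.2}.
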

	\begin{proof}
		By the Cauchy-Schwarz inequality and lemma $\ref{lemma 2.4}$ we have
		\begin{align*}
			\mathbb{E}|x(t; s, x_s)|^2 &= \mathbb{E}\bigg|U(t - s)x_s + \int_{s}^{t} {U(t - \tau)f(\tau, x(\tau; s, x_s))} \,{\rm d}\tau 
			+ \int_{s}^{t} {U(t - \tau)g(\tau)} \,{\rm d}B^H_{Q}(\tau)\bigg|^2
			\\ &\leq 3N^2 \left(  e^{-2{\alpha}(t - s)}|x_s|^2 
			+  \int_{s}^{t} {e^{-{\alpha}(t - \tau)} }\,{\rm d}\tau \int_{s}^{t} {e^{-{\alpha}(t - \tau)} \mathbb{E}|f(\tau, x(\tau; s, x_s))}|^2 \,{\rm d}\tau \right.
			\\ &\quad + \left. C_H \sum_{n = 1}^{\infty} \bigg( \int_{-\infty}^{t} \big| {e^{-{\alpha}(t - \tau)}g(\tau) Q^{\frac{1}{2}}e_n \big|^{\frac{1}{H}} } \,{\rm d}\tau  \bigg)^{2H} \right)
			\\ &\leq 3N^2 \left(  e^{-2{\alpha}(t - s)}\mathbb{E}|x_s|^2 + \frac{1}{{\alpha}} \int_{s}^{t} {e^{-{\alpha}(t - \tau)} \mathbb{E}|f(\tau, x(\tau; s, x_s))}|^2 \,{\rm d}\tau \right.
			\\ &\quad + \left. C_H \left( \int_{-\infty}^{t} {\big| {e^{-\frac{{\alpha}}{2}(t - \tau)}} \big|^{\frac{2}{2H-1}} } \,{\rm d}\tau  \right)^{2H - 1} 
			\int_{-\infty}^{t}  { e^{-{\alpha}(t - \tau)}\mathbb{E} \Vert g(\tau)\Vert^2_{L^0_Q(\mathbb{U}, \mathbb{H})}} \,{\rm d}\tau  \right)
			\\ &\leq 3N^2 \left(  e^{-2{\alpha}(t - s)}\mathbb{E}|x_s|^2 + \frac{1}{{\alpha}} \int_{s}^{t} {e^{-{\alpha}(t - \tau)} \mathbb{E}|f(\tau, x(\tau; s, x_s))}|^2 \,{\rm d}\tau \right.
			\\ &\quad + \left. C_H \left(\frac{2H - 1}{\alpha} \right)^{2H - 1} \int_{-\infty}^{t}  { e^{-{\alpha}(t - \tau)}\mathbb{E} \Vert g(\tau)\Vert^2_{L^0_Q(\mathbb{U}, \mathbb{H})}} \,{\rm d}\tau  \right)
			\\ &\leq 3N^2 e^{-{\alpha}t} \left(  e^{{\alpha}s}\mathbb{E}|x_s|^2 + \left(\frac{2 C_f^2}{{\alpha}} + C_g^2 C_H \left(\frac{2H - 1}{\alpha} \right)^{2H - 1} \right) \int_{s}^{t} {e^{{\alpha} \tau} } \,{\rm d}\tau \right. 
			\\ &\quad + \left. \frac{2 L^2}{{\alpha}} \int_{s}^{t} {e^{{\alpha} \tau}  \mathbb{E}|x(\tau; s, x_s)|^2} \,{\rm d}\tau \right).
		\end{align*}
		Thus,
		\begin{equation}\label{5.4}
			\begin{aligned}
				\mathbb{E}|x(t; s, x_s)|^2 &\leq 3N^2 e^{-{\alpha}t} \left(  e^{{\alpha}s}\mathbb{E}|x_s|^2 + \left(\frac{2 C_f^2}{{\alpha}} + C_g^2 C_H \left(\frac{2H - 1}{\alpha} \right)^{2H - 1} \right) (e^{{\alpha} t} -e^{{\alpha} s})\right.
				\\ &\quad + \left. \frac{2 L^2}{{\alpha}} \int_{s}^{t} {e^{{\alpha} \tau}\mathbb{E}|x(\tau; s, x_s)|^2} \,{\rm d}\tau \right).
			\end{aligned}
		\end{equation}
		Denote
		\begin{equation}
			\begin{aligned}
				u(t) := e^{{\alpha}t} \mathbb{E}|x(t; s, x_s)|^2, \quad  t \geq s.
			\end{aligned} \notag
		\end{equation}
		Substituting into $\eqref{5.4}$, we have
		\begin{equation}\label{5.5}
			\begin{aligned}
				u(t) &\leq 3N^2   e^{{\alpha}s}\mathbb{E}|x_s|^2 + 3N^2 \left(\frac{2 C_f^2}{{\alpha}} + C_g^2 C_H \left(\frac{2H - 1}{\alpha} \right)^{2H - 1} \right) (e^{{\alpha} t} -e^{{\alpha} s})
				\\ & \quad + \frac{6N^2 L^2}{{\alpha}} \int_{s}^{t} {e^{{\alpha} \tau}\mathbb{E}|x(\tau; s, x_s)|^2} \,{\rm d}\tau
				\\ & := 3N^2   e^{{\alpha}s}\mathbb{E}|x_s|^2 + C_1 (e^{{\alpha} t} -e^{{\alpha} s}) 
				+ C_2 \int_{s}^{t} {u(\tau)} \,{\rm d}\tau,
			\end{aligned}
		\end{equation}
		where $C_1 := C_1(\alpha,H,N) = 3N^2 \left(\frac{2 C_f^2}{{\alpha}} + C_g^2 C_H \left(\frac{2H - 1}{\alpha} \right)^{2H - 1} \right)$ and $C_2 := C_2(\alpha,N) = \frac{6N^2 L^2}{{\alpha}}$ are positive constants. To estimate $u(t)$ we consider the equation
		\begin{equation}
			\begin{aligned}
				\hat{u}(t) &= 3N^2   e^{{\alpha}s}\mathbb{E}|x_s|^2 + C_1 (e^{{\alpha} t} -e^{{\alpha} s}) 
				+ C_2 \int_{s}^{t} {\hat{u}(\tau)} \,{\rm d}\tau,
			\end{aligned}\notag
		\end{equation}
		that is, $\hat{u}(t)$ satisfies the equation
		\begin{equation}
			\begin{aligned}
				\hat{u}^{\prime}(t) &=  C_2 \hat{u}(t) + C_1 e^{{\alpha} t}
			\end{aligned}\notag
		\end{equation}
		with initial condition $\hat{u}(s) = 3N^2   e^{{\alpha}s}\mathbb{E}|x_s|^2$. So,
		\begin{equation}
			\begin{aligned}
				\hat{u}(t) &= e^{ C_2 (t - s)} \left(  \int_{s}^{t} { C_1 e^{{\alpha}\tau -  C_2 (\tau - s)}} \,{\rm d}\tau + \hat{u}(s) \right)
				\\&= \frac{C_1}{{\alpha} - C_2} e^{{\alpha}t} + e^{{\alpha}s + C_2(t - s)} \left(3N^2  \mathbb{E}|x_s|^2- \frac{C_1}{{\alpha} - C_2} \right).
			\end{aligned}\notag
		\end{equation}
		According to comparison principle, we have $u(t) \leq \hat{u}(t)$ for any $t \geq s$, that is, for $t \geq s$
		\begin{equation}
			\begin{aligned}
				{u}(t) \leq \frac{C_1}{{\alpha} - C_2} e^{{\alpha}t} + e^{{\alpha}s + C_2(t - s)} \left(3N^2  \mathbb{E}|x_s|^2- \frac{C_1}{{\alpha} - C_2} \right).
			\end{aligned}\notag
		\end{equation}
		Therefore,
		\begin{equation}
			\begin{aligned}
				\mathbb{E}|x(t; s, x_s)|^2 \leq \frac{C_1}{{\alpha} - C_2} + e^{-({\alpha} -  C_2)(t - s)} \left(3N^2  \mathbb{E}|x_s|^2- \frac{C_1}{{\alpha} - C_2} \right).
			\end{aligned}\notag
		\end{equation}
		Under the assumptions of condition $(iv)$, we obtain $ {\alpha} -  C_2 > 0 $. Therefore, we have
		\begin{equation}
			\begin{aligned}
				\limsup\limits_{t \to \infty}\mathbb{E}|x(t; s, x_s)|^2 \leq \frac{C_1}{{\alpha} - C_2}
			\end{aligned}\notag
		\end{equation}
		uniformly with respect to $x_s$ on every bounded subset of $L^2(\mathbb{P}, \mathbb{H})$. The theorem is completely proved.
	\end{proof}
	\begin{theorem}\label{theorem 5.4}
		Consider equation $\eqref{5.1}$. Assume the following conditions hold:
		\begin{itemize}
			\item[1.] The semigroup $\left\{U(t)\right\}_{t\geq 0}$ generated by operator $A$ is exponentially stable;
			\item[2.] $f \in C(\mathbb{R} \times \mathbb{H}, L^2(\mathbb{P}, \mathbb{H}))$ is globally Lipschitz in $x \in \mathbb{H}$ and $Lip(f) \leq L$;
			\item[3.] There exists a positive constant $C_1$ such that $|f(t, 0)| \leq \sqrt{C_1}$ and $ \Vert g(t)\Vert_{L^0_Q(\mathbb{U}, \mathbb{H})} \leq \sqrt{C_1}$ for all $t \in \mathbb{R}$;
			\item[4.]  $L < \frac{{\alpha}}{2\sqrt{2}N} $.
		\end{itemize}
		Then we can obtain the following statements:
		\begin{itemize}
			\item[1.] For any $t \geq s$ and $x_1, x_2 \in L^2(\mathbb{P}, \mathbb{H})$,
			\begin{equation}\label{5.6}
				\begin{aligned}
					\mathbb{E}|x(t; s, x_1) - x(t; s, x_2)|^2 \leq 2N^2  \mathbb{E}|x_1 - x_2|^2 e^{-\left( {\alpha} - \frac{2N^2 L^2 }{{\alpha}} \right) (t - s)};
				\end{aligned}
			\end{equation}
			\item[2.] Equation $\eqref{5.1}$ has a unique solution $\varphi$ in $C_b(\mathbb{R}, L^2(\mathbb{P}, \mathbb{H}))$ which is globally asymptotically stable and
			\begin{equation}\label{5.7}
				\begin{aligned}
					\mathbb{E}|x(t; s, x_s) - \varphi(t)|^2 \leq 2N^2   \mathbb{E}|x_s - \varphi(s)|^2 e^{-\left( {\alpha} - \frac{2N^2 L^2 }{{\alpha}} \right) (t - s)}
				\end{aligned}
			\end{equation}
			for any $t \geq s$ and $x_s \in L^2(\mathbb{P}, \mathbb{H})$.
		\end{itemize}
	\end{theorem}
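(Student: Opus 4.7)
My plan is to prove statement~1 by deriving a Gr\"onwall-type integral inequality for $w(t) := \mathbb{E}|x(t;s,x_1) - x(t;s,x_2)|^2$, and then to deduce statement~2 from it together with the existence/uniqueness result of Theorem~\ref{theorem 4.3}. Starting from the mild-solution representation in Definition~\ref{definition 5.2}, the difference of the two solutions satisfies
\begin{equation*}
x(t;s,x_1) - x(t;s,x_2) = U(t-s)(x_1-x_2) + \int_s^t U(t-\tau)\Delta F(\tau)\,d\tau + \int_s^t U(t-\tau)\Delta G(\tau)\,dB^H_Q(\tau),
\end{equation*}
where $\Delta F(\tau) := F(\tau,x_1(\tau))-F(\tau,x_2(\tau))$ and $\Delta G$ is defined analogously. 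Squaring, taking expectations, and using $(a+b+c)^2 \leq 3(a^2+b^2+c^2)$, I bound each piece exactly as in the proof of Theorem~\ref{theorem 5.3}: exponential stability yields $3N^2 e^{-2\alpha(t-s)}\mathbb{E}|x_1-x_2|^2$ for the semigroup term; the Cauchy--Schwarz inequality combined with the global Lipschitz condition gives $\frac{3N^2 L^2}{\alpha}\int_s^t e^{-\alpha(t-\tau)}w(\tau)\,d\tau$ for the drift; and the same H\"older--Lemma~\ref{lemma 2.4} chain used to derive \eqref{3.10}, paired with the constant $\tilde{C}$ of Definition~\ref{definition 5.3}, yields $3N^2 L^2 \tilde{C}\int_s^t e^{-\alpha(t-\tau)}w(\tau)\,d\tau$ for the stochastic integral.

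The substitution $u(t) := e^{\alpha(t-s)}w(t)$ then turns the resulting inequality
\begin{equation*}
w(t) \leq 3N^2 e^{-2\alpha(t-s)}\mathbb{E}|x_1-x_2|^2 + 3N^2 L^2 \Bigl(\tfrac{1}{\alpha}+\tilde{C}\Bigr)\int_s^t e^{-\alpha(t-\tau)}w(\tau)\,d\tau
\end{equation*}
into
\begin{equation*}
u(t) \leq 3N^2\mathbb{E}|x_1-x_2|^2 + 3N^2 L^2\Bigl(\tfrac{1}{\alpha}+\tilde{C}\Bigr)\int_s^t u(\tau)\,d\tau,
\end{equation*}
where I used $e^{\alpha(t-s)}e^{-2\alpha(t-s)}\leq 1$ for $t\geq s$. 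Gr\"onwall's lemma then gives $u(t) \leq 3N^2\mathbb{E}|x_1-x_2|^2 \exp\bigl(3N^2 L^2(\tfrac{1}{\alpha}+\tilde{C})(t-s)\bigr)$, which after multiplying by $e^{-\alpha(t-s)}$ is exactly \eqref{5.6}.

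For statement~2, the first half of condition~4 implies $2N^2 L^2(\frac{1}{\alpha^2}+\hat{C})<1$, so Theorem~\ref{theorem 4.3} furnishes a unique $\varphi \in C_b(\mathbb{R},L^2(\mathbb{P},\mathbb{H}))$ solving \eqref{5.1}. Uniqueness of the mild solution from a prescribed initial value (a standard contraction on each $[s,T]$) then gives $x(t;s,\varphi(s))=\varphi(t)$ for all $t\geq s$, so specializing \eqref{5.6} to $x_1=x_s$ and $x_2=\varphi(s)$ produces \eqref{5.7}. The second half of condition~4 is algebraically equivalent to $\alpha - 3N^2 L^2(\frac{1}{\alpha}+\tilde{C}) > 0$, which makes the exponent in \eqref{5.7} strictly negative and hence $\varphi$ globally asymptotically stable. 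The main technical obstacle I expect is making the stochastic-integral estimate rigorous: Lemma~\ref{lemma 2.4} requires $L^{1/H}$ integrands on a finite interval, so $\Delta G$ must first be extended by zero outside $[s,t]$ and the Wiener-chaos decomposition applied componentwise, with care that the constant $\tilde{C}$ of Definition~\ref{definition 5.3} really dominates the resulting sum uniformly in $t$; once that step is secured, the remainder is a routine Gr\"onwall argument.
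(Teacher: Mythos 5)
Your proposal is correct and follows essentially the same route as the paper: the same three-term splitting of the mild-solution difference, the same Cauchy--Schwarz and H\"older/Lemma~\ref{lemma 2.4} estimates producing the factor $3N^2L^2(\frac{1}{\alpha}+\tilde{C})$, the same exponential substitution and Gr\"onwall step for \eqref{5.6}, and the same specialization $x_1=x_s$, $x_2=\varphi(s)$ for \eqref{5.7}. The only cosmetic difference is that you invoke Theorem~\ref{theorem 4.3} for the existence of the bounded solution where the paper cites Theorem~\ref{theorem 4.5}; both hypotheses are implied by condition~4, so either reference works.
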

	\begin{proof}
		1. From Definition $\ref{definition 5.2}$, we have
		\begin{equation}
			\begin{aligned}
				x(t; s,x_i) &= U(t - s)x_i + \int_{s}^{t} {U(t - \tau)f(\tau, x(\tau; s,x_i))} \,{\rm d}\tau 
				+ \int_{s}^{t} {U(t - \tau)g(\tau)} \,{\rm d}B^H_{Q}(\tau)
			\end{aligned} \notag
		\end{equation}
		for $i = 1, 2$. By defining $y(t) := x(t; s, x_1) - x(t; s, x_2)$ for any $t \geq s$, we have
		\begin{equation}
			\begin{aligned}
				y(t) &= U(t - s)(x_1 - x_2) + \int_{s}^{t} {U(t - \tau)[f(\tau, x(\tau; s,x_1)) - f(\tau, x(\tau; s,x_2))]} \,{\rm d}\tau.
			\end{aligned} \notag
		\end{equation}
		Consequently,
		\begin{align*}
			\mathbb{E}|y(t)|^2  &\leq 2\left(  \mathbb{E}|U(t - s)(x_1 - x_2)|^2  
			+ \mathbb{E}\bigg| \int_{s}^{t} {U(t - \tau)[f(\tau, x(\tau; s,x_1)) - f(\tau, x(\tau; s,x_2))]} \,{\rm d}\tau \bigg|^2 \right)
			\\ &\leq 2N^2 \left(  \mathbb{E}| e^{-{\alpha}(t - s)}(x_1 - x_2)|^2 
			+ L^2 \mathbb{E}\bigg| \int_{s}^{t} {e^{-{\alpha}(t - \tau)} \big(x(\tau; s,x_1) - x(\tau; s,x_2)\big)} \,{\rm d}\tau \bigg|^2 \right)
			\\ &\leq 2N^2   e^{-2{\alpha}(t - s)}\mathbb{E}|x_1 - x_2|^2 
			+  2N^2 L^2 \int_{s}^{t} {e^{-{\alpha}(t - \tau)} }\,{\rm d}\tau \int_{s}^{t} {e^{-{\alpha}(t - \tau)} \mathbb{E}|\left(x(\tau; s,x_1) - x(\tau; s,x_2)\right)|^2} \,{\rm d}\tau
			\\ &\leq 2N^2   e^{-2{\alpha}(t - s)}\mathbb{E}|x_1 - x_2|^2 
			+ \frac{2N^2 L^2 }{{\alpha}} \int_{s}^{t} {e^{-{\alpha}(t - \tau)} \mathbb{E}|\left(x(\tau; s,x_1) - x(\tau; s,x_2)\right)|^2} \,{\rm d}\tau.
		\end{align*} 
		Thus,
		\begin{equation}\label{5.8}
			\begin{aligned}
				\mathbb{E}|y(t)|^2  \leq 2N^2   e^{-2{\alpha}(t - s)}\mathbb{E}|x_1 - x_2|^2  + \frac{2N^2 L^2 }{{\alpha}} \int_{s}^{t} {e^{-{\alpha}(t - \tau)} \mathbb{E}|y(t)|^2} \,{\rm d}\tau.
			\end{aligned}
		\end{equation}
		Set $z(t) := e^{{\alpha}t} \mathbb{E}|y(t)|^2$ for $t \geq s$, then from $\eqref{5.8}$ we have
		\begin{equation}
			\begin{aligned}
				z(t)  \leq 2N^2   e^{{\alpha} s}\mathbb{E}|x_1 - x_2|^2  +  \frac{2N^2 L^2 }{{\alpha}} \int_{s}^{t} {z(\tau)} \,{\rm d}\tau.
			\end{aligned}\notag
		\end{equation}
		By Gronwall-Bellman inequality,
		\begin{equation}
			\begin{aligned}
				z(t)  \leq 2N^2   e^{{\alpha} s}\mathbb{E}|x_1 - x_2|^2 e^{\frac{2N^2 L^2 }{{\alpha}} (t-s)}, \quad t \geq s,
			\end{aligned}\notag
		\end{equation}
		and consequently, by the definition of $z(t)$, we have
		\begin{displaymath}
			\begin{aligned}
				\mathbb{E}|x(t; s, x_1) - x(t; s, x_2)|^2 \leq 2N^2  \mathbb{E}|x_1 - x_2|^2 e^{-\left( {\alpha} - \frac{2N^2 L^2 }{{\alpha}} \right) (t - s)}, \quad t \geq s.
			\end{aligned}\notag
		\end{displaymath}
		
		2. According to Theorem $\ref{theorem 4.5}$, equation $\eqref{5.1}$ has a unique bounded solution $\phi \in C_b(\mathbb{R}, L^2(\mathbb{P}, \mathbb{H}))$ under the condition $L < \frac{{\alpha}}{2\sqrt{2}N} $, which is valid under the current condition $4$.
		
		Now let $\phi(t) = x(t; s, \phi(s))$ for any $t \geq s$. By applying inequality $\eqref{5.6}$ and its proof, we can conclude that Equation $\eqref{5.1}$ has a unique solution $\varphi \in C_b(\mathbb{R}, L^2(\mathbb{P}, \mathbb{H}))$, which is globally asymptotically stable. Moreover, for any $ t \geq s $ and $ x_s \in L^2(\mathbb{P}, \mathbb{H}) $, the following holds:
		\begin{displaymath}
			\mathbb{E}|x(t; s, x_s) - \varphi(t)|^2 \leq 2N^2 \mathbb{E}|x_s - \varphi(s)|^2 e^{-\left( \alpha - \frac{2N^2 L^2}{\alpha} \right) (t - s)}.
		\end{displaymath}
	\end{proof}
	
	\section{Applications}
	In this section, we validate our research findings through two concrete examples: one periodic and one quasi-periodic case. Specifically, we investigate the existence and asymptotic stability of Poisson-stable solutions for a stochastic semilinear evolution equation. By employing computational simulations, we generated visual representations to demonstrate the accuracy and reliability of our results. These simulations not only reinforce our theoretical analysis but also provide a clear, tangible illustration of the dynamic behavior of the solutions, further confirming the existence of stable solutions under both periodic and quasi-periodic conditions.
	\begin{example}
		Consider the following stochastic ordinary differential equation on $\mathbb{R}$ with Dirichlet boundary condition:
		\begin{equation}\label{49}
			\mathrm{d} u(t) = \big( -3 u(t) + \cos(t) \big) \mathrm{d}t + \sin(t) \mathrm{d}\beta^H(t),
		\end{equation}
		where $u(-100pi) = 0$ and $\beta^H(t)$ is a one-dimensional fractional Brownian motion with Hurst index $H = 0.8$.
		
		This can be rewritten as an stochastic evolution equation:
		\begin{displaymath}
			\mathrm{d} X(t) = \big( A X(t) + f(t) \big) \mathrm{d}t + g(t) \mathrm{d}B^H_Q(t),
		\end{displaymath}
		where $A = -3$, $f(t) = \cos(t)$, and $g(t) = \sin(t)$. The operator $A$ generates a $C_0$-semigroup $\{U(t)\}_{t \geq 0}$ with $\|U(t)\| \leq e^{-3t}$, implying $N = 1$ and $\alpha = 3$. Additionally, since $|f(t)| \leq 1$ and $|g(t)| \leq 1$, we have $C_f = C_g = 1$.
		
		All conditions for applying Theorems $\ref{theorem 4.3}$, $\ref{theorem 4.5}$, $\ref{theorem 5.3}$, and $\ref{theorem 5.4}$ are satisfied. Theorem $\ref{theorem 4.3}$ guarantees a unique bounded solution in $C_b(\mathbb{R})$. Moreover, as $ f(t) $ and $ g(t) $ are periodic with a period of $ 2\pi $, Theorem $\ref{theorem 4.5}$ and Corollary 2 ensure the solution $u(t)$ is periodic in distribution with a period of $ 2\pi $. Finally, Theorem $\ref{theorem 5.4}$ confirms the global asymptotic stability of $u(t)$ in the square-mean sense.
		
		To further validate our theoretical findings, we performed numerical simulations of the solution to the equation and provided graphical evidence through data visualization techniques. Specifically, we solved the equation 1,0000 times and recorded the solution values at time points $t = 0.8 \pi$, $t = 2.8 \pi$, $t = 4.8 \pi$ and $t = 6.8 \pi$. Using these values, we constructed the distribution of the solution at each of these time points, as illustrated in Figure \ref{F.1}. The resulting distributions confirmed our analytical conclusion that the solution is periodic with a period of $2\pi$ in distribution.
		\begin{figure}[htbp]
			\centering
			\includegraphics[width=15cm]{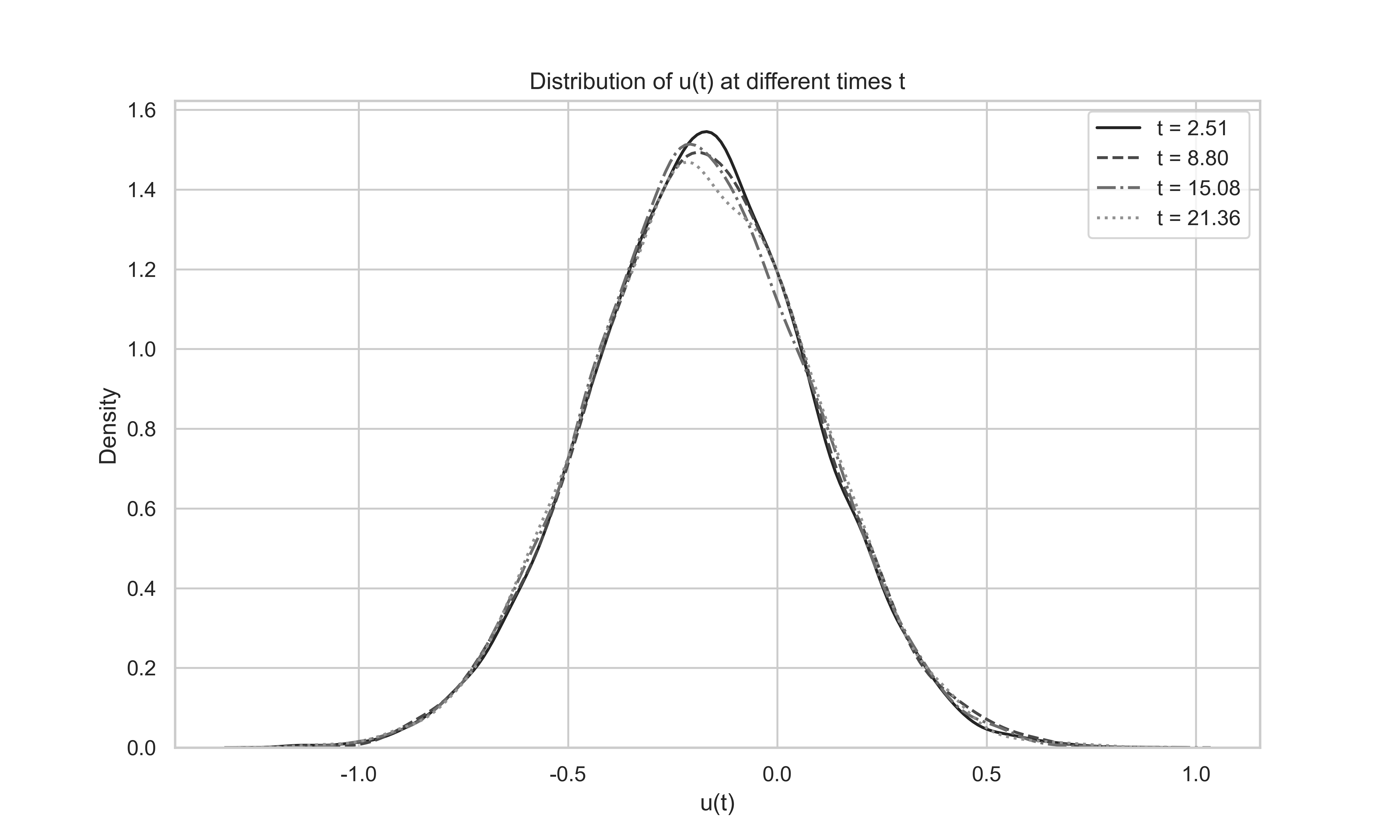}
			\centering
			\caption{The distribution of the solution to equation $\eqref{49}$ at $t = 0.8 \pi$, $t = 2.8 \pi$, $t = 4.8 \pi$ and $t = 6.8 \pi$. }
			\label{F.1}
		\end{figure} 
	\end{example}
	\begin{example}
		Let $\mathbb{H} := L^2(0, 1)$ and the norm on $\mathbb{H}$ by $\Vert \cdot \Vert$. Consider the following stochastic heat equation on the interval $[0,1]$ with Dirichlet boundary
		condition:
		\begin{equation}\label{50}
			\begin{aligned}
				\mathrm{d} u(t,x)= \big( \Delta u + \frac{(\sin(t) + \cos(\sqrt{3}t))\sin(u)}{3} \big) \mathrm{d}t + \frac{\cos(t) + \sin(\sqrt{2}t)}{2} \mathrm{d}\beta^H(t), 
			\end{aligned}
		\end{equation}
		\begin{align*}
			u(t,0) = 0.1 \sin(t),\quad u(t,1) = 0.1 \cos(t), \quad t \geq 0,
		\end{align*}
	where $\Delta :H^2(0,1) \cap H^1_0(0,1) \to L^2(0,1)$ is a Laplace operator which generates a $C_0$-semigroup $\left\{ U(t) \right\}_{t \geq 0}$ acting on ${\mathbb{H}}$.  $\beta^H(t)$ is a one-dimensional fBm. We can write this stochastic heat equation as an evolution equation:
	\begin{displaymath}
			\mathrm{d} X(t)= \big( AX(t) + f(t,X(t)) \big) \mathrm{d}t + g(t)\mathrm{d}B^H_{Q}(t), 
	\end{displaymath}
	where
	\begin{displaymath}
		\quad A := \Delta, \quad f(t,X(t)) := \frac{(\sin(t) + \cos(\sqrt{3}t))\sin(X)}{3},  \quad g(t) := \frac{\cos(t) + \sin(\sqrt{2}t)}{2}.
	\end{displaymath}
 	Note that the operator $A$ generates a $C_0$-semigroup $\{ U(t) \}_{t \geq 0}$ satisfying $\|U(t)\| \leq e^{-\pi^2 t}$ for all $t \geq 0$, i.e., $N = 1$ and $\alpha = \pi^2$. Next, we proceed to validate the conditions for $f(t, X)$ and $g(t)$. Specifically, note that $|f(t, 0)| \leq 1$ and $\text{Lip}(f) \leq \frac{2}{3}$, i.e., $C_f = C_g = 1$ and $L = \frac{2}{3}$. It is straightforward to verify that conditions $(C1)$–$(C3)$ are satisfied. Moreover, we can confirm that $L < \frac{\alpha}{2\sqrt{2}N}$. Therefore, all the conditions necessary for the applicability of Theorems \ref{theorem 4.3}, \ref{theorem 4.5}, \ref{theorem 5.3}, and \ref{theorem 5.4} are fulfilled.
 	
 	Furthermore, it is observed that $f(t,X(t)), g(t)$ exhibit quasi-periodicity in $t$ uniformly with respect to $X \in \mathbb{H}$. According to Theorem $\ref{theorem 4.3}$, equation $\eqref{50}$ admits a unique bounded solution in $C_b(\mathbb{R}, L^2(\mathbb{P}, \mathbb{H}))$. Moreover, by Theorem $\ref{theorem 4.5}$ and corollary $\ref*{corollary 2}$ the solution $u(t,x)$ is uniformly compatible in distribution, i.e. the solution $u(t,x)$ is quasi-periodic in $t$ uniformly with respect to $x \in \mathbb{H}$. Additionally, Theorem $\ref{theorem 5.4}$ confirms the global asymptotic stability of the solution $u(t,x)$ in the square-mean sense.
 
	To further corroborate our theoretical findings, we conduct numerical simulations of the solution to the equation and utilize data visualization techniques to furnish graphical evidence, as delineated in Fig. $\ref{F.2}$, thereby affirming the precision of our analytical results.
	\begin{figure}[htbp]
		\centering
		\includegraphics[trim={0.6cm 1.1cm 1.8cm 1.3cm}, clip, width=16cm]{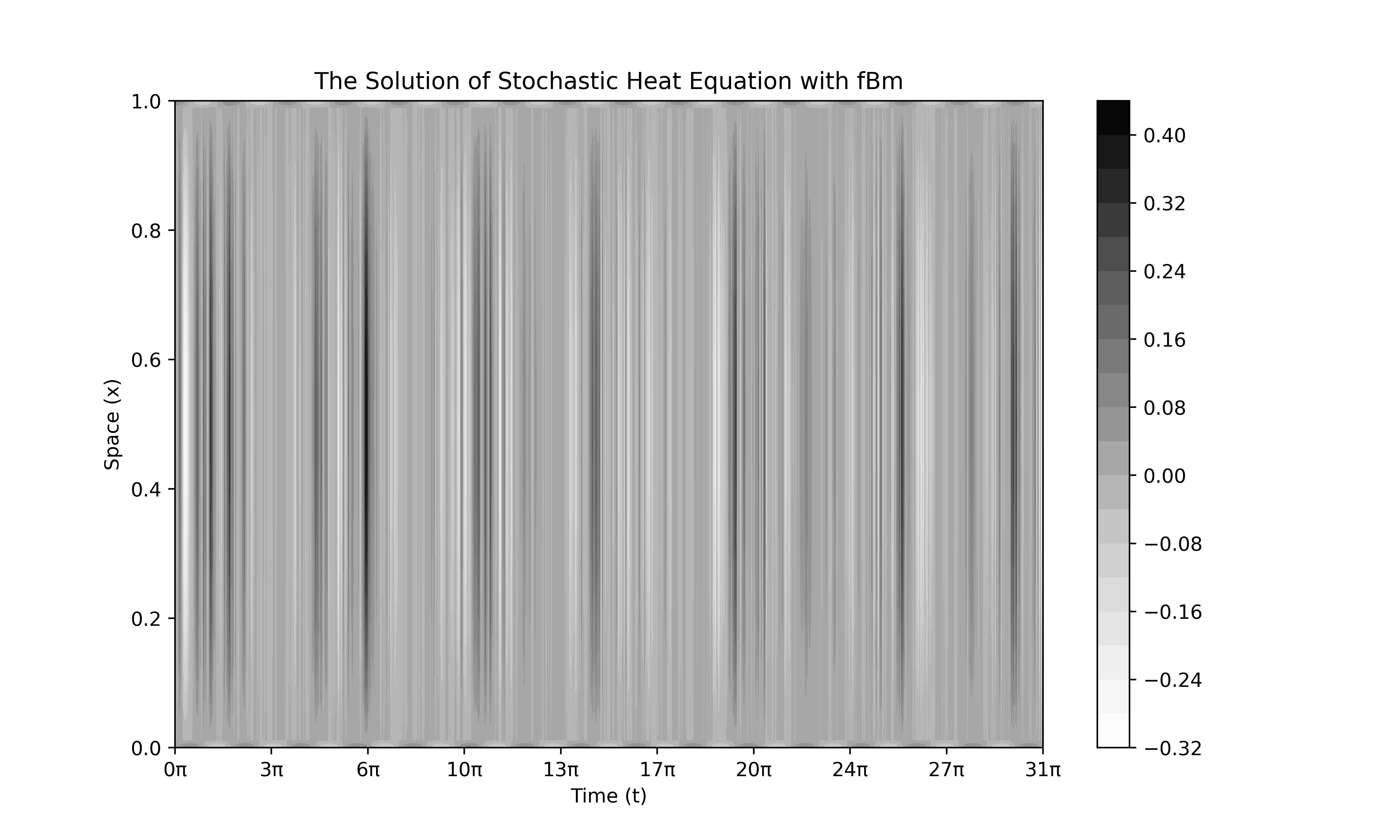}
		\centering
		\caption{The graph of the solution to equation $\eqref{50}$ on $t \in [0,31 \pi], x \in [0,1]$.}
		\label{F.2}
	\end{figure} 
	\end{example}

	
	
	\bibliographystyle{plain}
	\bibliography{Ref}
	
\end{document}